\title{On the Asymptotic Number of Generators of High Rank Arithmetic Lattices }
\author{Alexander Lubotzky, Raz Slutsky \\ 
~\\ \small{
\textit{Dedicated to Gopal Prasad on his 75th birthday }
}}
\date{}
\def\thmheadbrackets#1#2#3{%
  \thmname{#1}\thmnumber{\@ifnotempty{#1}{ }\@upn{#2}}%
  \thmnote{ {\the\thm@notefont[#3]}}}
\newtheoremstyle{brakets}
  {}
  {}
  {\itshape}
  {}
  {\bfseries}
  {.}
  { }
  {\thmheadbrackets{#1}{#2}{#3}}
\theoremstyle{brakets}
\newtheorem{thm}{Theorem}[section]
\newtheorem{lem}[thm]{Lemma}
\newtheorem{prop}[thm]{Proposition}
\newtheorem{defn}[thm]{Definition}
\newtheorem{cor}[thm]{Corollary}
\newcommand{\ie}{\emph{i.e.}}
\newcommand{\R}{\mathbb{R}}
\newcommand{\Z}{\mathbb{Z}}
\newcommand{\N}{\mathbb{N}}
\newcommand{\Q}{\mathbb{Q}}
\newcommand{\Addresses}{{
  \bigskip
  \footnotesize

  Alexander Lubotzky , \textsc{Institute of Mathematics, Hebrew University, Jerusalem 91904, Israel. }\par\nopagebreak
  \textit{E-mail address},: \texttt{alex.lubotzky@mail.huji.ac.il}

  \medskip

  Raz Slutsky, \textsc{Department of Mathematics. Weizmann Institute of Science. Rehovot 76100, Israel.}\par\nopagebreak
  \textit{E-mail address}:  \texttt{razslo@gmail.com}

}}
\begin{document}

\maketitle

\begin{abstract}
Abert, Gelander and Nikolov \cite{agn17} conjectured that the number of generators $d(\Gamma)$ of a lattice $\Gamma$ in a high rank simple Lie group $H$ grows sub-linearly with $v = \mu(H / \Gamma)$, the co-volume of $\Gamma$ in $H$. We prove this for non-uniform lattices in a very strong form, showing that for $2-$generic such $H$'s, $d(\Gamma) = O_H(\log v / \log \log v)$, which is essentially optimal. While we can not prove a new upper bound for uniform lattices, we will show that for such lattices one can not expect to achieve a better bound than $d(\Gamma) = O(\log v)$.
\end{abstract}

\section{Introduction}

Let $H$ be a connected, non-compact, simple real Lie group, with a fixed Haar measure $\mu$. A discrete subgroup $\Gamma$ of $H$ is called a \emph{lattice} if $\mu(H/\Gamma) < \infty$. It is called uniform (or co-compact) if $H/ \Gamma$ is compact, and non-uniform otherwise. 
In \cite{Ge11}, Gelander showed that there exists a constant $C_1 = C_1(H)$ such that $d(\Gamma) \leq C_1 \: \mu(H/ \Gamma)$ where $d(\Gamma)$ is the minimal number of generators of such $\Gamma$. Recently, this was shown for other types of Lie groups \cite{gs20}. \\

In \cite[Conjecture 3]{agn17}, Abert, Gelander and Nikolov conjectured that if $H$ is of high rank, \ie, $\text{rank}_\R(H) \geq 2$, then $d(\Gamma)$ grows sub-linearly with $\mu(H/\Gamma)$.\\

The main goal of the present paper is to prove a strong form (essentially optimal) of this conjecture for the \textbf{non-uniform} lattices of $H$. We will make some remarks on the uniform case, but at this stage, we are unable to prove the conjecture for this case. We do, however, give a lower bound on $d(\Gamma)$ for uniform lattices, demonstrating a distinction between the growth rates of the two classes.\\

Following \cite{bl19}, we say that $H$ is 2-generic if the centre of the simply connected cover $\tilde{H}$ of the split form of $H$ is a 2-group, and $\tilde{H}$ has no outer automorphisms of order three. This is the case for "most" $H$'s. In fact it holds for all $H$ unless it is of type $E_6, D_4$ or $A_n$ for $n \neq 2^m -1$. \\

For convenience, and without loss of generality, we assume from now on that $\mu$, the Haar measure on $H$, is normalized so that $\mu(H/ \Gamma) > 1$ for every lattice $\Gamma$ in $H$. This is possible since there is a lower bound on the co-volume of lattices due to Kazhdan and Margulis \cite{KM}. In addition, throughout this paper all logarithms are in base 2. \\

Our main result is the following.

\begin{thm}\label{main theorem}
Let $H$ be a simple Lie group with $\R$-$\text{rank}(H) \geq 2$. Then there exists a constant $C_2 = C_2(H)$ such that 
\begin{itemize}
\item[(a)] For every non-uniform lattice $\Gamma$ of $H$ we have $$ d(\Gamma) \leq C_2 \log (\mu(H/\Gamma))$$
\item[(b)] If $H$ is 2-generic, then for every non-uniform lattice $\Gamma$ of $H$ we have $$d(\Gamma) \leq C_2 \dfrac{\log (\mu (H/ \Gamma))}{\log \log (\mu(H/ \Gamma))} $$
\end{itemize}
\end{thm}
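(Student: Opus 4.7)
The plan is to combine Margulis arithmeticity, the congruence subgroup property (CSP), bounded generation of principal congruence subgroups, and bounds on the number of generators of finite groups whose composition factors are 2-generated simple groups of Lie type.

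First, since $\mathrm{rank}_{\R}(H) \geq 2$, Margulis arithmeticity forces $\Gamma$ to be arithmetic, so $\Gamma$ is commensurable with $G(\mathcal{O}_S)$ for some algebraic $F$-group $G$ and a finite set of places $S$. I would fix a maximal arithmetic lattice $\Gamma_0$ containing a bounded-index subgroup of $\Gamma$; replacing $\Gamma$ by $\Gamma \cap \Gamma_0$ changes $d(\Gamma)$ and $\log v$ only by constants depending on $H$, so we may assume $\Gamma \subseteq \Gamma_0$ with $[\Gamma_0 : \Gamma] = O(v)$.

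Next, the CSP for non-uniform lattices in higher-rank groups (Bass--Milnor--Serre, Raghunathan, Prasad--Raghunathan) guarantees that $\Gamma$ contains a principal congruence subgroup $\Gamma_0(I)$, and standard subgroup-growth bounds let us choose $I$ with $[\Gamma_0:\Gamma_0(I)] = v^{O(1)}$. The short exact sequence
\[
 1 \to \Gamma_0(I) \to \Gamma \to \Gamma/\Gamma_0(I) \to 1
\]
yields $d(\Gamma) \leq d(\Gamma_0(I)) + d(\Gamma/\Gamma_0(I))$. The first term is bounded by a constant $C(H)$ uniformly in $I$: this follows from the bounded generation of higher-rank arithmetic Chevalley groups and their principal congruence subgroups by elementary generators (Carter--Keller, Tavgen, Morris, Murty--Pathak), where the number of such generators depends only on the root datum of $G$.

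For the remaining term, strong approximation together with the Chinese Remainder Theorem identifies $\Gamma_0/\Gamma_0(I)$ with $\prod_P G(\mathcal{O}/P^{e_P})$, of total order $v^{O(1)}$. For part (a), applying the trivial bound $d(K) \leq \log_2 |K|$ to the subgroup $\Gamma/\Gamma_0(I)$ already gives the desired $O(\log v)$ estimate and thus completes the non-$2$-generic case. For part (b), the 2-genericity hypothesis ensures that all simple composition factors arising in $\Gamma_0/\Gamma_0(I)$ are finite simple groups of Lie type whose Schur multipliers are 2-groups and which have no order-three outer automorphisms, so that the contributions from the centres and from the abelian chief factors have bounded 2-rank. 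A Kantor--Lubotzky-type bound on generation of products of simple groups of Lie type, applied to a subdirect product of such quasisimple groups, then yields the sharper $O(\log v/\log\log v)$ estimate.

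The crux of the argument is this last step: $\Gamma/\Gamma_0(I)$ is not the full product $\prod_P G(\mathcal{O}/P^{e_P})$ but only a subdirect subgroup of index $[\Gamma_0:\Gamma]$, and a priori such subgroups can require significantly more generators than the ambient product. The role of 2-genericity is precisely to keep the centre-and-abelianization noise small enough that the simple Lie-type composition factors dominate the count; this is where the gap between (a) and (b) opens up, and it is the main technical obstacle I would expect to face.
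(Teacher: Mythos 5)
Your overall skeleton---reduce to a maximal lattice, pass to a principal congruence subgroup via CSP, split $d(\Gamma)$ along a short exact sequence, and estimate the two pieces separately---is broadly the same shape as the paper's argument, and the observation that the trivial $d(K)\le\log_2|K|$ bound is enough for part (a) is indeed essentially how the paper extracts (a). But there are some genuine gaps.

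The most serious is a misplacement of the 2-genericity hypothesis. In the paper, 2-genericity is not about the internal structure of the finite quotient $\Gamma/\Lambda(m\mathcal{O})$ at all. That quotient is handled, without any genericity assumption, using only the prime number theorem and Pr\"{u}fer rank bounds for $\prod_{v\mid p}SL_s(\mathcal{O}_v)$ (Prop.\ \ref{rank fo SL}); this already gives $O(\log m/\log\log m)$ generators there in every case. Where 2-genericity actually enters is earlier, in Prop.\ \ref{Index of Principal}(ii): the index $|\Gamma_0/\Lambda'|$ of the principal arithmetic lattice inside the maximal one is governed by the class group of the field of definition $k$ (with $[k:\Q]\le 2$), and it is Gauss' genus theory---which bounds only the $2$-torsion of $Cl(k)$---that gives the $C_4^{\log v/\log\log v}$ bound. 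For the odd-torsion parts, which occur when $H$ fails to be 2-generic, no such bound is known, and one is stuck with $v^{C_4}$. So the "crux" you identify in your final paragraph (the subdirect-product/Schur-multiplier noise in $\Gamma/\Gamma_0(I)$) is not where the gap between (a) and (b) actually opens; it opens in the normalizer-versus-principal-lattice index, which your reduction glosses over by conflating the maximal lattice $\Gamma_0=N(\Lambda')$ with a principal arithmetic group that carries a congruence structure.

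Second, your claim that $d(\Gamma_0(I))$ is bounded by a constant $C(H)$ uniformly in $I$ is false in general. That constant bound (the Corollary after Lemma \ref{final lemma}) holds only when every local parahoric $P_v$ is hyperspecial, i.e.\ the Chevalley-scheme case. For a general principal arithmetic lattice the set $T$ of non-hyperspecial places has size $\Theta(\log v/\log\log v)$, and the corresponding factor $B=\prod_{v\in T}P_v$ in the congruence completion forces $d(\Lambda(m\mathcal{O}))=\Theta(\log m/\log\log m)$. This does not hurt the final estimate, but the reasoning that a fixed number of elementary generators suffices (Carter--Keller, Tavgen, etc.) is confined to split or quasi-split Chevalley situations and does not apply to the full range of isotropic $k$-forms and parahoric families appearing here. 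Relatedly, by going through bounded generation you never invoke the Sharma--Venkataramana theorem (Thm.\ \ref{Sharma Venkataramana}), which the paper singles out as a main ingredient: it is what lets the authors replace $d(\Lambda(m\mathcal{O}))$ by $d$ of its congruence completion at the cost of $+4$, uniformly across all non-uniform higher-rank arithmetic groups, rather than relying on bounded generation results that are not available in full generality.
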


We remark that we believe that the sharper bound of $(b)$, which is best possible (see Section \ref{lower bound section}), holds for all $H$, but it depends (and actually equivalent to) some delicate number-theoretic conjectures. More precisely, Gauss' celebrated theorem \cite[Theorem 8, p. 247]{bs66} gives a very precise description of the order of the 2-torsion of the class group $Cl(k)$ of quadratic number fields $k$. From this theorem one deduces that $d_2(Cl(k)) = O (\frac{\log D_k}{\log \log D_k})$ where $d_2(Cl(k))$ is the number of generators of the $2-$Sylow subgroup of $Cl(k)$ and $D_k$ is the absolute value of the discriminant $\Delta_k$ of $k$. Now, if we would know such bounds for the $p-$Sylow subgroups for the odd primes, then the estimate of $(b)$ would follow for all $H$. In fact, this is essentially equivalent (See \cite[Sections 3 and 7]{bl19}).\\

However, despite much effort over the years, the current knowledge is quite far from having such bounds (see \cite{ptbw20}). \\

The intimate connection between the group-theoretic/geometric statement of Theorem 1.1 and the delicate number theory is not as surprising as it seems at first sight. By the Margulis Arithmeticity Theorem \cite{margulis91}, every $\Gamma$ in a high rank group $H$ is an arithmetic lattice, and unlike the methods of \cite{Ge11} and \cite{agn17}, our method will use this fact extensively.\\

Our second result is based on the existence of infinite class field towers of totally real fields, due to Golod and Shafarevich \cite{gs64}, and follows the lines of \cite{bl12}:

\begin{thm}
\label{lower bound}
Let $H$ be a simple Lie group of high rank. Then there exists a constant $C_3 = C_3(H)$ and a sequence of uniform lattices $\Gamma_i \leq H$ with $\mu(H/\Gamma_i) \rightarrow \infty$ such that
$$ d(\Gamma_i) \geq C_3 \log \mu(H/\Gamma_i)  $$

\end{thm}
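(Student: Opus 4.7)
The plan is to follow the strategy of \cite{bl12}: use infinite $p$-class field towers to produce cocompact arithmetic lattices in $H$ whose abelianizations, and hence generating sets, grow logarithmically with the covolume. By \cite{gs64} and its refinements for totally real fields, we can fix a prime $p$ and a totally real number field $k$ whose $p$-Hilbert class field tower is infinite, giving an ascending chain $k = k_0 \subset k_1 \subset k_2 \subset \cdots$ of everywhere-unramified abelian $p$-extensions with $[k_n:k] \to \infty$. After enlarging $k$ if necessary while preserving the infinite tower, I would fix a $k$-form $\mathbf{G}$ of $H$ that is $\R$-anisotropic at every archimedean place except one, where it yields $H$; by Borel–Harish-Chandra this produces a uniform arithmetic lattice $\Gamma_0 = \mathbf{G}(\mathcal{O}_k)$ in $H$.

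For each $n$, I would attach to the unramified abelian extension $k_n/k$ a finite-index normal subgroup $\Gamma_n \leq \Gamma_0$, using strong approximation together with the congruence subgroup property (valid in high rank by Raghunathan and Rapinchuk) to pull back $\mathrm{Gal}(k_n/k)$ to a quotient of $\Gamma_0$. Two estimates would then be needed. First, $\mu(H/\Gamma_n) = [\Gamma_0:\Gamma_n] \, \mu(H/\Gamma_0)$ with the index a $p$-power comparable to $[k_n:k]$, giving $\log \mu(H/\Gamma_n) \asymp \log [k_n:k]$. Second, and more delicate, $\dim_{\F_p} H^1(\Gamma_n,\F_p) \gtrsim \log [k_n:k]$, coming from an elementary abelian $p$-quotient of $\Gamma_n^{ab}$ whose rank is proportional to $\log [k_n:k]$ and is built from the $p$-parts of the class groups of the fields in the tower. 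Combining these with the trivial $d(\Gamma_n) \geq \dim_{\F_p} H^1(\Gamma_n,\F_p)$ would yield the claimed bound.

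The main obstacle will be the second, cohomological, estimate. In the non-uniform case one could produce large $H^1$-classes geometrically from cusps; here, for cocompact $\Gamma_n$, no such source is available, and one must invoke the congruence subgroup machinery to match $\Gamma_n^{ab} \otimes \F_p$ against a class-field-theoretic quotient, and then verify that this quotient captures the growing $p$-rank coming from the Golod–Shafarevich tower. A secondary issue is ensuring the construction works for \emph{every} high-rank $H$: one has to choose $k$ large enough to carry the desired $k$-form of $H$ while keeping the infinite-tower property, which is standard once a starting field with an infinite $p$-class field tower has been fixed.
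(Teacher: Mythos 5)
Your overall strategy --- using Golod--Shafarevich class field towers to produce cocompact arithmetic lattices whose covolume grows slowly while the number of generators grows quickly --- is the right one and is indeed what the paper, following \cite{bl12}, does. But the way you propose to extract lattices from the tower has a fundamental gap. You fix the base field $k$, set $\Gamma_0 = \mathbf{G}(\mathcal{O}_k)$, and then try to realize $\mathrm{Gal}(k_n/k)$ as a quotient of $\Gamma_0$ via strong approximation and CSP. This cannot work: strong approximation and CSP say precisely that every finite quotient of $\Gamma_0$ factors, up to a finite cyclic congruence kernel, through $\prod_v P_v$ modulo an open subgroup, and such quotients are built from finite groups of Lie type over local residue fields --- they carry no trace of the Hilbert class field tower of $k$. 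In fact, by the upper bound mechanism of this very paper (Section \ref{section of congruence} and Lemma \ref{final lemma}), once the field of definition is fixed and CSP holds, every congruence subgroup $\Gamma_n$ of index $N$ has $d(\overline{\Gamma_n}) = O(\log N / \log\log N)$, so the estimate $\dim_{\F_p} H^1(\Gamma_n,\F_p) \gtrsim \log N$ that you need is actually impossible in your setting. (You also misattribute CSP: Raghunathan's theorem covers only non-uniform lattices; for the cocompact lattices you would actually need here it remains open in many cases.)

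The correct move is to let the \emph{field of definition} climb the tower. For each field $k_i$ in the Golod--Shafarevich tower, one builds a principal arithmetic lattice $\Lambda_i \leq H$ defined over $k_i$ itself. Since each $k_i/k$ is unramified, the root discriminant $\mathcal{D}_{k_i}^{1/d_i}$ stays bounded as $d_i = [k_i:\Q]\to\infty$, and Prasad's volume formula (applied via Prasad--Rapinchuk) gives $\mu(H/\Lambda_i)\leq c_1^{d_i}$. For a fixed prime $p$, the congruence quotient $\Lambda_i/\Lambda_i(p)$ contains a root subgroup isomorphic to $(\F_p)^{d_i}$, and the preimage $\Gamma_i$ of that subgroup satisfies $d(\Gamma_i)\geq d_i$ while $\mu(H/\Gamma_i)\leq (c_1 p^{\dim H})^{d_i}$. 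It is the growth of the degree $d_i$ along the tower, not the growth of the level over a fixed number field, that produces the logarithmic lower bound.
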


This lower bound shows that the growth rate of $d(\Gamma)$ for uniform lattices is strictly larger than that of non-uniform ones, thus establishing a further distinction between the two types of lattices.\\

As in \cite{bl12} and \cite{bl19}, the distinction stems from the fact that non-uniform lattices in $H$ are defined over number fields of bounded degree over $\Q$, while the degrees of the number fields defining uniform lattices are unbounded. \\

Let us now describe the main line of the proof of Theorem \ref{main theorem}.\\

Venkataramana \cite{ve87, ve94} has developed a method to show that various subgroups of arithmetic groups are of finite index. This method uses unipotent elements and hence is valid only for non-uniform arithmetic lattices. This accumulates to the following result of Sharma and Venkataramana \cite{sv05} which is a first main ingredient of our proof:

\begin{thm}[SV05, Theorem 1] \label{Sharma Venkataramana} Every high rank non-uniform arithmetic group $\Gamma$ has a subgroup of finite index which is generated by at most three elements.

\end{thm}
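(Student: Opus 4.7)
The plan is to exploit Venkataramana's unipotent generation machinery applied to the rich supply of unipotents that a non-uniform high-rank arithmetic lattice possesses. First, by the Margulis arithmeticity theorem and a standard commensurability reduction, I may assume $\Gamma = G(\mathcal{O}_k)$ for a simply connected, absolutely almost simple $k$-group $G$ over a number field $k$, and non-uniformity together with Godement's compactness criterion gives that $G$ is $k$-isotropic (so $\Gamma$ contains non-trivial unipotents). The high rank hypothesis, after absorbing any restriction of scalars, yields $\mathrm{rank}_k(G)\geq 2$, hence a $k$-split torus $S \subset G$ of dimension at least $2$ and a root system $\Phi=\Phi(G,S)$ of rank $\geq 2$.

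Fix a long $k$-root $\alpha \in \Phi$ and pick $u_1 \in U_\alpha(\mathcal{O}_k)$ and $u_2 \in U_{-\alpha}(\mathcal{O}_k)$, each corresponding (via the $\alpha$-coroot embedding) to a nontrivial element of an $SL_2$-type subgroup. The Jacobson--Morozov style argument, or an explicit computation with the $SL_2$-triple, shows that $\Lambda := \langle u_1, u_2\rangle$ is a lattice (infinite, non-elementary, Zariski-dense) in the corresponding copy of $SL_2$; in particular $\Lambda$ contains elements that act on any other root subgroup $U_\beta(\mathcal{O}_k)$ via the restriction of the Weyl/torus action, producing many distinct conjugates of any fixed element. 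Now choose $\beta \in \Phi$ not proportional to $\alpha$ and take the third generator $u_3 \in U_\beta(\mathcal{O}_k)$. The Chevalley commutator formulas, applied to conjugates $\lambda u_3 \lambda^{-1}$ for $\lambda \in \Lambda$, produce elements of $U_{m\alpha + n\beta}(\mathcal{O}_k)$ for all relevant $(m,n) \neq (0,0)$, and by varying $\lambda$ one sweeps out all of $\mathcal{O}_k$ additively inside each such root subgroup.

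The final step is to invoke Venkataramana's theorem that a subgroup of $G(\mathcal{O}_k)$ containing a full lattice in the unipotent radical of a parabolic and in its opposite is of finite index (this is the same machinery that underlies Theorem \ref{Sharma Venkataramana}'s companion results on the congruence subgroup problem and on Zariski-dense subgroups generated by unipotents). One checks that the group $\langle u_1,u_2,u_3\rangle$ contains, after the sweeping process, commensurable copies of $U_\gamma(\mathcal{O}_k)$ for enough roots $\gamma$ that the corresponding unipotent radical is captured; combined with $u_2$ (or its conjugates) giving the opposite unipotent, Venkataramana's result closes the argument.

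The main obstacle is the $\mathcal{O}_k$-arithmetic one: although $SL_2(\mathbb{Z})$ is generated by two unipotents, $U_\alpha(\mathcal{O}_k) \cong \mathcal{O}_k^+$ has rank $[k:\mathbb{Q}]$ as an abelian group, so no two unipotents can generate a finite-index subgroup of even the $SL_2$ part, let alone all of $\Gamma$. The key trick is that the $SL_2(\mathbb{Z})$-type action of $\Lambda$ on $U_\beta$ combined with the bilinearity of the Chevalley commutators $[u_\alpha(a), u_\beta(b)] = \prod u_{i\alpha + j\beta}(c_{ij} a^i b^j)$ turns a single seed $u_3 = u_\beta(x_0)$ into a $\mathbb{Z}[\Lambda]$-orbit that, by Zariski density of $\Lambda$ in $SL_2$ and the arithmetic of $\mathcal{O}_k$, generates $U_\beta(\mathcal{O}_k)$ up to finite index. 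Delivering this last ingredient rigorously---controlling the index without an extra generator per embedding of $k$ into $\mathbb{C}$---is precisely where the non-trivial content of \cite{sv05} lies.
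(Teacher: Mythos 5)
The paper does not prove this theorem; it is imported as a black box from Sharma--Venkataramana~\cite{sv05}, and the whole point of the surrounding argument is to reduce to it. So there is no ``paper's own proof'' to compare against, and your proposal must stand on its own. It does not quite hold up, for the following concrete reasons.

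First, the reduction ``the high rank hypothesis $\ldots$ yields $\mathrm{rank}_k(G)\ge 2$'' is false. High real rank of $H$ does \emph{not} force high $k$-rank of $G$: there are $\Q$-forms of, say, $SO(p,q)$ or $SU(p,q)$ with $\Q$-rank $1$ but $\R$-rank $\ge 2$, and these give non-uniform high-rank lattices. In such a case the relative root system has rank $1$ (type $A_1$ or $BC_1$), so there is \emph{no} root $\beta$ non-proportional to $\alpha$ and your step of choosing $u_3\in U_\beta$ has no meaning. The actual Sharma--Venkataramana theorem is stated under the weaker hypothesis $\mathrm{rank}_k(G)\ge 1$ and $\sum_{v\in S}\mathrm{rank}_{k_v}(G)\ge 2$, precisely because the $k$-rank-$1$ case cannot be avoided, and it requires a separate argument.

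Second, the assertion that $\Lambda=\langle u_1,u_2\rangle$ ``is a lattice (infinite, non-elementary, Zariski-dense) in the corresponding copy of $SL_2$'' conflates Zariski density with arithmetic density. Two unipotents in $U_{\pm\alpha}(\mathcal{O}_k)$ do generate a Zariski-dense free subgroup, but for $[k:\Q]>1$ this subgroup has infinite index in the $\mathcal{O}_k$-points of the $\alpha$-$SL_2$ (a thin group). Since $U_\alpha(\mathcal{O}_k)\cong\mathcal{O}_k^+$ has $\Z$-rank $[k:\Q]$, there is no way around this with unipotents alone, and you in fact acknowledge this in your final paragraph. The fix in \cite{sv05} is not a third unipotent but a generic semisimple element $a$ of a maximal $k$-torus: conjugation by $a$ dilates $u_\alpha(c)\mapsto u_\alpha(\alpha(a)c)$, and choosing $a$ so that $\alpha(a)$ generates a finite-index subring of $\mathcal{O}_k^\times$-orbits supplies the missing $\mathcal{O}_k$-module structure in the root groups. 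That is a genuinely different choice of generators than yours (two unipotents plus one semisimple, rather than three unipotents), and it is what makes the ``sweeping'' step go through. Your own closing sentence---that ``delivering this last ingredient rigorously $\ldots$ is precisely where the non-trivial content of \cite{sv05} lies''---correctly identifies the gap, but leaving it there means the proposal does not establish the theorem.

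In short: the strategy of feeding a Zariski-dense subgroup containing lattices in opposite horospherical subgroups into Venkataramana's finite-index criterion is the right framework, but the reduction to $k$-rank $\ge 2$ is wrong, the ``lattice in $SL_2$'' claim is wrong for $k\ne\Q$, and the key arithmetic step is asserted rather than proved.
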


It follows that if $\hat{\Gamma}$ is the pro-finite completion of $\Gamma$ and $d(\hat{\Gamma})$ denotes the minimal number of topological generators of $\hat{\Gamma}$, then 
$$
d(\hat{\Gamma}) \leq d(\Gamma) \leq d(\hat{\Gamma}) + 3
$$

Thus, it suffices to prove Theorem \ref{main theorem} for $d(\hat{\Gamma})$ rather then $d(\Gamma)$. Now, by a standard inverse limit argument, $d(\hat{\Gamma})$ is the supremum over $d(S)$ where $S$ runs over finite quotients of $\Gamma$. Moreover, by a well-known result of Raghunathan \cite{raghunathan76}, non-uniform $\Gamma$'s satisfy the congruence subgroup property (denoted CSP from this point onwards), so we have to deal only with quotients by congruence subgroups. The proof for those will use some methods and techniques from \cite{bl12, bl19} which in turn use crucially the seminal work of Prasad \cite{prasad}. These are valid in almost the same way for uniform and non-uniform lattices. Thus, the obstacle that prevents us from proving the conjecture in its full generality is the use of Theorem \ref{Sharma Venkataramana} and the fact that the CSP is known only for some uniform lattices.\\

For simplicity of the introduction, we treated in this introduction the case where $H$ is simple, but similar results and methods apply to the case where $H$ is semi-simple and $\Gamma$ runs over all irreducible lattices, see Section \ref{lower bound section}. \\

\emph{This paper is dedicated to Gopal Prasad with admiration and affection. 
Prasad has made fundamental contributions to the arithmetic theory of 
algebraic groups. In particular, this paper is based on his seminal work on the co-volume of arithmetic lattices.} \\

\noindent\textbf{Acknowledgments.} 
We would like to thank Andrei Rapinchuk and Igor Rapinchuk for helpful discussions and references. \\
The first author is indebted for support from the NSF (Grant No. DMS-1700165) and the European Research Council (ERC) under the European Union's Horizon 2020 research and innovation program (Grant No. 692854). This material is based upon work supported by a grant from the Institute for Advanced Study. \\
The second author is indebted for support from the Israel Science Foundation grant ISF 2919/19, and would like to thank Tsachik Gelander for his continued support and guidance.
\vspace{\baselineskip}

\section{Principal Arithmetic Groups and Congruence Subgroups}
\label{Section Principal}
Let $H$ be a high rank simple connected linear Lie group,
 and let $G$ be a semi-simple, simply connected, connected algebraic group defined over a number field $k$,
 with an epimorphism 
 $\phi: G( k \otimes_\Q \R) \rightarrow H$ whose kernel is compact. Then $\phi(G(\mathcal{O}))$ and subgroups of $H$ which are commensurable to it are called \emph{arithmetic}. All irreducible lattices in higher rank arise as such \cite{margulis91}, and one of the key facts for our purposes is that for non-uniform lattices, the degree of $k$ is bounded, where the bound depends only on $H$. Indeed, this follows from the well known result that $H/ \Gamma$
is non-compact if and only if $\Gamma$ contains non-trivial unipotent elements (see \cite[Chapter 5.3]{dave}). Hence, $G( k \otimes_\Q \R)$ has no compact factors, which implies that the number of Archimedean completions of $k$ is bounded by the number of simple factors of H and so $[k : \Q]$ is bounded, and when $H$ is simple, $[k:\mathbb{Q}] \leq 2$ .\\
 
Let now $\Gamma_0$ be a maximal lattice in $H$.
It is known (see \cite[Prop. 1.4]{bp89}) that in this setting, $\Gamma_0 = N(\Lambda')$ where $\Lambda'$ is a nice arithmetic group, namely the \textit{principal arithmetic subgroup} associated to a coherent family $(P_v)_{v \in V_f}$ of Parahoric subgroups in $G(k_v)$. More precisely, 

$$\Lambda = \Lambda(P) = G(k) \cap \prod_{v \in V_f} P_v $$
where $G$ is a $k-$form of $H$, $V_f$ is the set of finite places of $k$, $P_v \subset G(k_v)$, and $\Lambda'$ is the image of $\Lambda$ in $H$, namely $\phi(\Lambda) = \Lambda' \subset H$. \\

Now, for every $v$ there exists a smooth affine group scheme $G_v$ defined over $\mathcal{O}_v$ such that $G_v(\mathcal{O}_v) = P_v$ and $G_v(k_v)$ is $k_v$-isomorphic to $G(k_v)$. This induces a congruence subgroup structure on $P_v$ defined as:

$$P_v(r) = \ker (G_v(\mathcal{O}_v)  \rightarrow G_v(\mathcal{O}_v / \pi_v^r \mathcal{O}_v)) $$
where \(\pi_{v}\) is a uniformizer of \(\mathcal{O}_{v}\). These congruence subgroups induce a congruence
structure on \(\Lambda \), namely \( \Lambda\left(\pi_{v}^{r}\right)=\mathrm{P}_{v}(r) \cap \Lambda .\) More generally, for every ideal \(I\) of \(\mathcal{O}\) look at its
closure \(\bar{I}\) in \(\hat{\mathcal{O}}=\prod_{v} \mathcal{O}_{v}\). Then \(\bar{I}\) is equal to \(\prod_{i=1}^{l} \pi_{v_{i}}^{e_{i}} \hat{\mathcal{O}}\) for some \(Y=\left\{v_{1}, \ldots, v_{l}\right\} \subset V_{f}\)
and \(e_{1}, \ldots, e_{l} \in \mathbb{N}\). We then define the \(I\)-congruence subgroup of \(\Lambda,\)
$$
\Lambda(I)=\Lambda \cap\left(\prod_{i=1}^{l} \mathrm{P}_{v_{i}}\left(e_{i}\right) \cdot \prod_{v \notin Y} \mathrm{P}_{v}\right)
$$

In particular, for every $m \in \mathbb{N}$, the $m-$congruence subgroup is defined as $\Lambda(m) := \Lambda(m \mathcal{O})$, and any subgroup of $\Lambda$ which contains $\Lambda(I)$ for some $0 \neq I \triangleleft \mathcal{O} $ is called a congruence subgroup.\\

In the next few sections we use  the congruence structure of such lattices to prove the main theorem. We will study $\Lambda$ and $\Lambda'$ interchangeably since an upper bound on $d(\Gamma)$ for $\Gamma$ in $\Lambda$ gives a similar bound on $d(\Gamma')$ for $\Gamma' = \phi(\Gamma)$ in $\Lambda'$.

\subsection{Reduction to Subgroups of Principal Arithmetic Lattices}
First, we use the fact that the index of these principal arithmetic subgroups $\Lambda' \leq \Gamma_0$ is bounded by a function of the co-volume of $\Gamma_0$, and so we can work inside the principal arithmetic lattice while paying a negligible price. More precisely, we have by \cite[Prop. 4.1]{bl19}:

\begin{prop}
\label{Index of Principal}
There exists a constant $C_4 = C_4(H)$ such that for every $Q = \Gamma_0 / \Lambda'$ with $\mu(H/\Gamma_0) = v$ as above,

\begin{itemize}
\item[(i)] $|Q| \leq v^{C_4} $
\item[(ii)] if $\Gamma_0$ is non-uniform and $H$ is 2-generic, then $|Q| \leq C_4^{\log v / \log \log v}$.
\end{itemize}

\end{prop}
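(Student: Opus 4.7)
The plan is to combine three ingredients: (a) Borel--Prasad's structural description of the finite quotient $N_H(\Lambda')/\Lambda'$ in terms of class group data of the defining field $k$; (b) Prasad's volume formula, which lower bounds $v = \mu(H/\Gamma_0)$ by (roughly) a positive power of the discriminant $D_k$ together with local factors; and (c) classical bounds on the order and $2$-torsion of class groups.

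For part (i), I would start from the Borel--Prasad exact sequence (following \cite{bp89}) expressing $Q$ as a subquotient of a finite abelian group built from the class group $\mathrm{Cl}(k)$ raised to the order of $Z(G)$, together with a ``local contribution'' which is a product over a finite set of primes $T$ of $k$ of groups of size bounded in terms of the local ramification indices. Prasad's formula then gives $D_k \leq v^{c_1}$ and likewise bounds the size of $T$ and the local contributions by a polynomial in $v$. Combining this with a crude Minkowski-style bound $|\mathrm{Cl}(k)| \leq D_k^{1/2 + o(1)}$ (Brauer--Siegel is more than enough) yields $|Q| \leq v^{C_4}$.

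For part (ii), the $2$-generic hypothesis forces $Z(\widetilde{H})$ to be a $2$-group and eliminates the triality outer automorphism, so the number-theoretic input from the class group factor in $Q$ reduces to the $2$-primary part $\mathrm{Cl}(k)[2^n]$ for $n$ bounded in terms of $H$. The non-uniform hypothesis gives $[k:\Q] \leq 2$ (in the simple case, as recalled at the start of Section \ref{Section Principal}), and hence $D_k \leq v^{O(1)}$. Gauss's genus theory, via the bound $d_2(\mathrm{Cl}(k)) = O(\log D_k / \log \log D_k)$ recalled in the introduction, then gives
$$|\mathrm{Cl}(k)[2^n]| \leq 2^{n \cdot d_2(\mathrm{Cl}(k))} \leq C_4^{\log v / \log \log v}.$$
The local contributions to $Q$ in the $2$-generic case are also $2$-primary of bounded exponent, so they too are absorbed into the same type of bound.

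The main technical obstacle is the bookkeeping: one must show that the ``local contribution'' appearing in the Borel--Prasad description of $N_H(\Lambda')/\Lambda'$ is truly controlled by the same local data that Prasad's formula forces into $v$, rather than being an independent source of growth. In particular, for part (ii) one must verify that \emph{every} prime-to-$2$ contribution to $|Q|$ is killed by the $2$-generic hypothesis, so that the Gauss-type bound on $d_2(\mathrm{Cl}(k))$ is actually the dominant term; this requires a careful inspection of the cohomological terms in the Borel--Prasad sequence, which is exactly the content of \cite[Prop.~4.1]{bl19}.
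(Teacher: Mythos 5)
Your sketch follows the same strategy as the paper, which proves this proposition by citing Belolipetsky--Lubotzky \cite[Prop.~4.1]{bl19}: the Borel--Prasad description of $N(\Lambda')/\Lambda'$, Prasad's volume formula controlling the discriminant, and class-group bounds (Minkowski/Brauer--Siegel for $(i)$; Gauss genus theory plus $[k:\Q]\le 2$ for $(ii)$), with the cohomological bookkeeping explicitly deferred to that reference.

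There is, however, a gap in your justification of part $(ii)$. It is not enough that each local contribution to $Q$ be $2$-primary of bounded exponent: the local contribution is a product over a finite set $T$ of ramified/non-hyperspecial places, so bounded per-place factors only give a total of the form $C^{|T|}$. For $(i)$ the bound $|T| = O(\log v)$ is enough and follows easily from Prasad's formula, but for $(ii)$ one needs the strictly stronger $|T| = O(\log v / \log\log v)$. This requires a prime-number-theorem argument (the local factors at places of $T$ grow with the residue characteristic and the places lie over distinct rational primes, so the product of roughly the first $|T|$ primes lower-bounds $v$); it is the same bound $|T| = O_H(\log m / \log\log m)$ invoked later in the proof of Lemma~\ref{final lemma}. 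Your framing --- that for $(ii)$ one need only check that prime-to-$2$ contributions vanish --- misses this quantitative constraint: even a purely $2$-primary, bounded-exponent local contribution spread over $O(\log v)$ places would give only $v^{O(1)}$, not $C^{\log v / \log\log v}$.
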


This enables us to reduce our main theorem to the following:

\begin{thm}
\label{first reduction}
There exists a constant $C_5 = C_5(H)$ such that  if $\Lambda'$ is a non-uniform principal arithmetic group as above, and $\Gamma \leq \Lambda'$ a finite index subgroup of co-volume $v$, then $d(\Gamma) \leq C_5 \log(v)$. Furthermore, if $H$ is 2-generic, we have $d(\Gamma) \leq C_5 \log (v) / \log \log (v)$. 
\end{thm}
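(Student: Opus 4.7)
The plan is to combine three ingredients: the Sharma--Venkataramana theorem (Theorem~\ref{Sharma Venkataramana}) applied to $\Gamma$ itself, Raghunathan's congruence subgroup property, and Prasad's volume formula together with the arithmetic analysis of principal congruence indices in \cite{bl12, bl19}. First, Theorem~\ref{Sharma Venkataramana} applied to $\Gamma$ (still a high-rank non-uniform arithmetic lattice) gives $d(\Gamma) \leq d(\hat{\Gamma}) + 3$, reducing us to bounding $d(\hat{\Gamma})$. Since $\Gamma$ inherits the congruence subgroup property from $\Lambda'$, we have $d(\hat{\Gamma}) = \sup_{I} d\bigl(\Gamma/\Gamma(I)\bigr)$ where $\Gamma(I) := \Gamma \cap \Lambda(I)$ and $I$ ranges over non-zero ideals of $\mathcal{O}$ with $\Lambda(I) \subseteq \Gamma$. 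Writing $I = \prod_{i=1}^{l}\pi_{v_i}^{e_i}\hat{\mathcal{O}}$, the quotient $\Gamma/\Gamma(I)$ embeds in $\Lambda/\Lambda(I)$, and by strong approximation the latter sits, with cokernel bounded by a constant depending on $H$, inside the product $\prod_{i=1}^{l} S_i$ where $S_i := G_{v_i}(\mathcal{O}_{v_i}/\pi_{v_i}^{e_i})$.

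The core of the proof is to bound $d(\prod_i S_i)$. Each $S_i$ is generated by at most $c(H)$ elements, since $G_{v_i}$ has rank bounded in terms of $H$, its reduction mod $\pi_{v_i}$ is a group of Lie type boundedly generated by root subgroups, and the principal congruence kernels form a pro-nilpotent filtration of bounded step. For products of such quasi-simple-like groups, the classical Kov\'acs--Wiegold estimate yields
$$ d\Bigl(\prod_{i=1}^{l} S_i\Bigr) \;\leq\; \max_i d(S_i) \;+\; c_1(H)\,\log(m+1), $$
where $m$ counts the largest number of factors sharing a common non-abelian simple composition factor. Two $S_i$ can share a simple composition factor only when the residue fields $\kappa(v_i)$ have the same cardinality; since $[k:\Q]$ is bounded for non-uniform $\Gamma$, the number of places of $\mathcal{O}$ of any given residue norm is itself bounded by a function of $H$, so $m$ is comparable to $l$.

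Translating to the co-volume: Prasad's volume formula equates $[\Lambda:\Lambda(I)]$ with $\prod_i |S_i|$ up to a factor depending only on $H$, so $\prod_i |S_i| \leq v^{O_H(1)}$, and since each $|S_i| \geq q_i^{e_i} \geq 2$, the standard number-theoretic bound on the number of distinct prime divisors of an integer of size $\leq N$ gives $l \leq O(\log v / \log \log v)$. This already yields $d(\Gamma) = O(\log v)$, proving part (a). For part (b), the sharper bound requires that the bounded global corrections in Prasad's formula, coming from arithmetic invariants such as class groups and component groups, also contribute only $O(\log v/\log\log v)$ generators. Under the $2$-generic hypothesis these corrections reduce to the $2$-parts of the relevant class groups, which are controlled by Gauss' genus theorem as discussed after Theorem~\ref{main theorem}. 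Combined with the above prime-counting estimate for $l$, this yields (b).

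The principal obstacle lies in the second step: obtaining a sharp generator bound for the specific subgroup $\Gamma/\Gamma(I) \leq \prod_i S_i$ that faithfully reflects the diagonal constraints inherited from the global group $\Gamma$, and simultaneously tracking the exact shape of Prasad's error terms. This is precisely the technical analysis performed in \cite{bl12, bl19}, and it is also where the number-theoretic input becomes essential; the absence of odd-prime analogues of Gauss' theorem is exactly why the sharper statement (b) is available only under the $2$-generic assumption.
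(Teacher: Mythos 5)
Your high-level reduction — Sharma--Venkataramana to pass to $d(\hat\Gamma)$, then CSP to restrict to congruence quotients — matches the opening moves of the paper's proof. But from there the argument diverges and develops a real gap.

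The paper's key step (its Lemma \ref{Level vs Index} together with Lemma \ref{final lemma}) is to locate a \emph{single} principal congruence subgroup $\Lambda(m\mathcal{O})\subseteq\Gamma$ of controlled level $m\le v^{C_7+1}$, split
\[
d(\Gamma)\le d(\Lambda(m\mathcal{O}))+d(\Gamma/\Lambda(m\mathcal{O})),
\]
and then bound $d(\Lambda(m\mathcal{O}))$ \emph{as a discrete group} by decomposing its congruence completion as $A\times B\times C$ (places dividing $m$; non-hyper-special places; hyper-special places). The crux is $C$: an \emph{infinite} product of parahorics, of unbounded rank, but with $d(C)$ bounded because each factor is a pro-$p$ group (bounded rank, with only $\le [k:\Q]$ factors per prime) extended by a quasi-simple group occurring with bounded multiplicity. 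The $O(\log m/\log\log m)$ comes from the \emph{finite} piece $B$ (number of non-hyper-special places) and from the prime-counting bound on the number of primes dividing $m$ applied to $\Gamma/\Lambda(m\mathcal{O})$. Your proposal never isolates this decomposition.

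Instead you write $d(\hat\Gamma)=\sup_I d(\Gamma/\Gamma(I))$ and try to bound each quotient $\prod_i S_i$ directly. This fails on two counts. First, as $I\to 0$ the number of factors $l$ is unbounded, yet you invoke Prasad's formula to get $\prod_i|S_i|\le v^{O_H(1)}$; that inequality only controls the \emph{minimal} level $I$ with $\Lambda(I)\subseteq\Gamma$, not the shrinking ideals over which your supremum ranges, so the prime-counting estimate $l=O(\log v/\log\log v)$ does not apply to the quantity you need. Second, the Kov\'acs--Wiegold input is misapplied: the $S_i$ are not simple (they have large pro-$p$ composition series), and your own paragraph first asserts that the multiplicity of any residue-norm is bounded by a function of $H$ (correct — by $[k:\Q]\le 2$), then concludes ``so $m$ is comparable to $l$,'' which is a non sequitur. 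Taken literally your estimate would give $d(\prod_i S_i)=O_H(1)$, contradicting the lower bound in Section \ref{lower bound section}.

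Finally, your explanation of where part (b) comes from is misplaced. In Theorem \ref{first reduction} the $\log v/\log\log v$ refinement does not rely on Gauss' genus theorem or class groups; it comes from the prime number theorem bounding the number of prime divisors of $m$ and the number of non-hyper-special places. Gauss' theorem enters only in Proposition \ref{Index of Principal}, controlling $[\Gamma_0:\Lambda']$ when passing from a maximal to a principal arithmetic lattice — a step that lies outside the scope of Theorem \ref{first reduction}.
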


Let us first show how Theorem \ref{first reduction} and Proposition \ref{Index of Principal} imply the main Theorem \ref{main theorem}.\\

\begin{prop}
Theorem \ref{first reduction} and Proposition \ref{Index of Principal} imply Theorem \ref{main theorem}.
\end{prop}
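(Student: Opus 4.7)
The plan is to start with an arbitrary non-uniform lattice $\Gamma$ in $H$ of co-volume $v = \mu(H/\Gamma)$, pass to a suitable finite-index subgroup inside a principal arithmetic lattice in order to invoke Theorem~\ref{first reduction}, and then lift the resulting bound back to $\Gamma$ using Proposition~\ref{Index of Principal} together with the elementary inequality $d(G) \leq d(K) + \log_2[G:K]$, valid for any finite-index subgroup $K \leq G$ (each new generator adjoined to a subgroup strictly containing $K$ strictly decreases its index, hence at best halves it, so at most $\log_2[G:K]$ extra generators are needed).

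First, I would embed $\Gamma$ in a maximal lattice $\Gamma_0 \supseteq \Gamma$. Then $\mu(H/\Gamma_0) \leq v$, and $\Gamma_0$ remains non-uniform since the non-trivial unipotent elements of $\Gamma$ persist in $\Gamma_0$. Let $\Lambda' \trianglelefteq \Gamma_0$ be the principal arithmetic subgroup of Section~\ref{Section Principal}, write $Q = \Gamma_0/\Lambda'$, and set $\Gamma_1 := \Gamma \cap \Lambda'$. The natural injection $\Gamma/\Gamma_1 \hookrightarrow \Gamma_0/\Lambda'$ gives $[\Gamma : \Gamma_1] \leq |Q|$, hence $\mu(H/\Gamma_1) \leq |Q| \cdot v$. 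Applying Theorem~\ref{first reduction} to $\Gamma_1 \leq \Lambda'$ bounds $d(\Gamma_1)$ in terms of $\log \mu(H/\Gamma_1)$ (or by $\log \mu(H/\Gamma_1)/\log\log \mu(H/\Gamma_1)$ in the 2-generic case), and the generation inequality above then yields $d(\Gamma) \leq d(\Gamma_1) + \log_2 |Q|$.

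Finally, I substitute the two estimates on $|Q|$ supplied by Proposition~\ref{Index of Principal}. For part~(a), $|Q| \leq v^{C_4}$, so $\log\mu(H/\Gamma_1) \leq (C_4+1)\log v$ and $\log_2|Q| \leq C_4 \log v$, giving $d(\Gamma) = O_H(\log v)$. For part~(b), $|Q| \leq C_4^{\log v/\log\log v}$, so $\log\mu(H/\Gamma_1) = O(\log v)$ while $\log\log\mu(H/\Gamma_1) \sim \log\log v$ and $\log_2|Q| = O(\log v/\log\log v)$, yielding the sharper rate. No real obstacle is expected; the only thing worth verifying carefully is that the rate $\log v/\log\log v$ is preserved under the substitution $v \mapsto |Q| \cdot v$ with $|Q|$ bounded as in (ii), which reduces to the routine asymptotic $\log\log(v \cdot C_4^{\log v/\log\log v}) = \log\log v + O(1)$.
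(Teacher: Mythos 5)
Your proposal is correct and follows essentially the same route as the paper: pass to $\Gamma_1 = \Gamma \cap \Lambda'$ inside the principal arithmetic subgroup, apply Theorem~\ref{first reduction} to $\Gamma_1$, and then pay the price of the index $[\Gamma:\Gamma_1] \leq |Q|$ using Proposition~\ref{Index of Principal}, with the final asymptotic check $\log\log(|Q|\cdot v) = \log\log v + O(1)$. The only cosmetic difference is that you invoke the general inequality $d(G) \leq d(K) + \log_2[G:K]$ (which needs no normality), whereas the paper uses the normality $\Gamma' = \Gamma\cap\Lambda' \trianglelefteq \Gamma$ (coming from $\Gamma_0 = N(\Lambda')$) to write $d(\Gamma) \leq d(\Gamma') + d(\Gamma/\Gamma')$ and then bounds $d(\Gamma/\Gamma') \leq \log_2|Q|$; both yield the same estimate.
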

\begin{proof}

Assume that $H$ is $2-$generic. Let $\Gamma$ be a lattice in $H$ of co-volume $v$ and $\Gamma_0$ a maximal lattice containing it. 

Let $\Gamma_0 = N(\Lambda')$ as above and $\Gamma' = \Gamma \cap \Lambda'$. By Proposition \ref{Index of Principal}, $$[\Gamma : \Gamma'] \leq C_4^{\log v / \log \log v} \leq v^{\log C_4} $$

Now, since $\mu(H / \Gamma) \leq v$ we have $$\mu(H / \Gamma') \leq v^{1+\log C_4} $$ 

We can now use Theorem \ref{first reduction}, to deduce:
\begin{align*}
d(\Gamma') \leq & C_5((\log v^{1+\log C_4})/ \log \log v^{1+ \log C_4})  \\ \leq & C_5(1+\log C_4) \dfrac{\log v}{\log \log v} = C_6 \dfrac{\log v}{\log \log v}
\end{align*} 

Now, $\Gamma' = \Gamma \cap \Lambda' \trianglelefteq \Gamma$ and $\Gamma / \Gamma' \cong \Lambda' \Gamma / \Lambda'$. Using again Prop. \ref{Index of Principal} we deduce that 
$$d(\Gamma / \Gamma') \leq (\log C_4) \dfrac{\log v}{\log \log v} $$

Hence $d(\Gamma) \leq d(\Gamma') + d(\Gamma / \Gamma') \leq  (C_6+\log C_4) \dfrac{\log v}{\log \log v}$.\\

The case when $H$ is not $2$-generic is similar and even slightly simpler.

%
%
%
%
%
%
\end{proof}


\section{Proof of the Upper Bound}
\label{section of proof}
\subsection{Reduction to Congruence Subgroups}
\label{section of congruence}
We wish to use the congruence subgroup property and Theorem \ref{Sharma Venkataramana} to further reduce the main theorem to a question about finite quotients of congruence subgroups. Notice that the arguments so far did not use the fact that the lattices are non-uniform. However, from now on, we are going to use Theorem \ref{Sharma Venkataramana}, which is known only for non-uniform lattices, and the positive answer to the congruence subgroup problem for such lattices due to \cite{raghunathan76}.\\

First of all, we are going to pass to the pro-finite completion of $\Gamma$ and $\Lambda$. This is due to the result of Sharma-Venkataramana in Theorem \ref{Sharma Venkataramana}, which says that every non-uniform lattice has a finite index subgroup which is generated by at most 3 elements. It follows that for such lattices $d(\Gamma) \leq 3 + d(\widehat{\Gamma})$. In other words, it is enough to bound the number of generators of all finite quotients of $\Gamma$.
\\

The second important assumption is the Congruence Subgroup Property.\\

Recall that $\Lambda = G(k) \cap \prod_{v \in V_f} P_v$. By the CSP, its pro-finite completion $\widehat{\Lambda}$ is essentially equal to its congruence completion. To be more precise, the congruence kernel, $C = \ker (\widehat{\Lambda} \rightarrow \prod_{v \in V_f} P_v)$ is finite, hence, $d(\widehat{\Lambda}) \leq d(C) + d(\overline{\Lambda})$ where $\overline{\Lambda}$ is the congruence completion of $\Lambda$. \\

Now, the result of \cite{pr96} shows that the Margulis-Platonov conjecture and the CSP conjecture imply that $C$ is cyclic. Thus, $d(\widehat{\Lambda}) \leq 1 + d(\overline{\Lambda})$. In the non-uniform case, both the Margulis-Platonov and the CSP conjecture are known, see \cite{raghunathan76,pr10}. In addition, the same inequality holds for every finite index subgroup $\Gamma$ of $\Lambda$. Hence altogether $d(\Gamma) \leq d(\widehat{\Gamma}) + 3 \leq d(\overline{\Gamma}) +4$, where $\overline{\Gamma}$ is the congruence completion of $\Gamma$, which is in fact equal to the closure of $\Gamma$ in $\prod_{v \in V_f} P_v$, by strong approximation \cite[Thm 7.12]{pr93}.
\\

Working now with the congruence completion and congruence subgroups, we use a quantitative version of the "level versus index" lemma, in order to pass to principal congruence subgroups.\\

Recall the classical lemma asserting that in $SL_2(\Z)$, every congruence subgroup of index $n$ contains $\Delta(m) = \ker (SL_2(\Z) \rightarrow SL_2(\Z / m\Z))$ for some $m \leq n$. This lemma was later generalized in a quantitative manner in \cite[Lemma 4.3, Remark 4.4]{bl12}: 

\begin{lem}
\label{Level vs Index}
Let $\Lambda$ be a principal arithmetic group in $H$ with $\mu(H/ \Lambda) \leq v$, then if $\Lambda_1$ is a congruence subgroup of $\Lambda$ of index $n$, then $\Lambda(m\mathcal{O}) \subset \Lambda_1$ where $m \leq v^{C_7}n$, $C_7$ depends only on $H$, and $\Lambda(m\mathcal{O})$ is the principal congruence subgroup of level $m$. \\

In addition, if we restrict to non-uniform lattices, we have that $$[\Lambda:\Lambda(m\mathcal{O})] \leq (vn)^{C_8}$$
\end{lem}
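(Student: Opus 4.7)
The lemma is a higher-rank generalization of the classical Wohlfahrt-type bound for $SL_2(\mathbb{Z})$. The plan is to exploit the product decomposition of the congruence completion of a principal arithmetic lattice, together with Prasad's volume formula, to convert a bound on the index of $\Lambda_1$ into a bound on its level.

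First I would pass to the congruence completion. Let $U$ denote the closure of $\Lambda_1$ inside $\overline{\Lambda}\subset\prod_{v\in V_f}P_v$; since $\Lambda_1$ is a congruence subgroup of index $n$, we have $[\overline{\Lambda}:U]=n$. For each finite place $v$, embed $P_v$ in $\prod_w P_w$ as the coordinate subgroup at $v$ and set
$$e_v = \min\{e\geq 0 : P_v(e)\subseteq U\}.$$
Openness of $U$ forces $e_v=0$ for all but finitely many $v$. Since the coordinate subgroups $P_v(e_v)$ pairwise commute inside $\prod_w P_w$ and each lies in the closed subgroup $U$, their product $\prod_v P_v(e_v)$ lies in $U$, so $\Lambda(I)\subseteq \Lambda_1$ for $I=\prod_v \mathfrak{p}_v^{e_v}$. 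It then suffices to bound $\prod_v q_v^{e_v}$.

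Next I would bound each $e_v$ quantitatively using the local structure. At a place where $P_v$ is hyperspecial the successive quotients $P_v(i)/P_v(i+1)$ are $\mathbb{F}_v$-vector spaces of dimension $\dim G$, so $[P_v:P_v(e)]$ grows like $q_v^{e\cdot \dim G}$. The minimality condition $P_v(e_v-1)\not\subseteq U$ contributes a non-trivial image in the finite quotient $\prod_w P_w/U$ of order $n$; a combinatorial accounting over all $v$ with $e_v\geq 1$ then yields an estimate of the form
$$\prod_v q_v^{e_v}\;\leq\; n\cdot \kappa(\Lambda),$$
where $\kappa(\Lambda)$ is a correction factor encoding contributions from the non-hyperspecial parahorics. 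By Prasad's volume formula (the same tool underlying Proposition~\ref{Index of Principal}), every non-hyperspecial $P_v$ contributes a definite multiplicative amount to $\mu(H/\Lambda)\leq v$, so the number of such places and their joint contribution to $\kappa(\Lambda)$ is bounded by $v^{C_7}$. Translating from the ideal level $I$ to a rational integer level: since each $\mathfrak{p}_v$ lies above a single rational prime of norm at most $q_v$, one can take $m\leq\prod_v q_v^{e_v}\leq v^{C_7}n$ with $m\mathcal{O}\subseteq I$, giving the first part of the lemma.

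For the second estimate, non-uniformity enters through the bound $[k:\mathbb{Q}]\leq 2$ established in Section~\ref{Section Principal}. Using the product decomposition
$$[\Lambda:\Lambda(m\mathcal{O})]\;\leq\;\prod_{v\mid m\mathcal{O}}[P_v:P_v(r_v)],$$
together with $[P_v:P_v(r)]\leq q_v^{r\cdot \dim G}$ at hyperspecial places and controlled local corrections at the $O(\log v)$ non-hyperspecial places, one gets $[\Lambda:\Lambda(m\mathcal{O})]\leq m^{O(1)}\cdot v^{O(1)}\leq (vn)^{C_8}$. The main obstacle in both parts is the careful bookkeeping of non-hyperspecial contributions, which is essentially dictated by Prasad's volume formula and was worked out in this precise form in \cite{bl12}.
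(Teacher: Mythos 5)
The paper does not prove this lemma; it cites \cite[Lemma 4.3, Remark 4.4]{bl12}, so there is no in-text proof to compare against line by line. Your sketch correctly identifies the broad framework: pass to the congruence completion $\overline{\Lambda}=\prod_v P_v$, introduce the local levels $e_v$ so that $\prod_v P_v(e_v)\subseteq U$, use Prasad's volume formula to control non-hyperspecial places, and invoke the bounded degree $[k:\Q]\le 2$ of the defining field in the non-uniform case to get the index estimate $[\Lambda:\Lambda(m\mathcal{O})]\le (vn)^{C_8}$ from $[\Lambda:\Lambda(m\mathcal{O})]\le m^{[k:\Q]\dim G}$.

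However, the step you label ``a combinatorial accounting over all $v$ with $e_v\ge 1$ then yields $\prod_v q_v^{e_v}\le n\cdot\kappa(\Lambda)$'' is exactly where all the content of the lemma lives, and as written it is an assertion, not an argument. The difficulty is that $U$ is an arbitrary open subgroup of the product, not a product of its coordinate intersections, so $[\overline{\Lambda}:U]=n$ gives you a \emph{lower} bound $n\le\prod_v[P_v:P_v\cap U]$, not the upper bound you need on the level data; the minimality of $e_v$ only tells you that $P_v(e_v-1)U/U$ is nontrivial, which a priori contributes a factor as small as $p_v$, not $q_v^{e_v}$. To make the accounting work one needs a genuine structural input: for instance, pass to the normal core $N$ of $U$; observe that each $P_v(j)$ is normal in $\overline{\Lambda}$ so that $P_v(e_v)\subseteq N$ while $P_v(e_v-1)\not\subseteq N$; use that at hyperspecial $v$ with $q_v$ large the quotient $P_v/P_v(1)$ is quasi-simple and the successive layers $P_v(i)/P_v(i+1)$ are irreducible $P_v/P_v(1)$-modules, so the normal subgroups of $P_v$ are rigid and $N\cap P_v=P_v(e_v)$; and then bound $[\overline{\Lambda}:N]$ polynomially in $n$ via the bounded composition-factor-rank (BCP/Babai--Cameron--P\'alfy type) control on subgroups of $S_n$ of the relevant class, with the non-hyperspecial and small-residue-characteristic places absorbed into the $v^{C_7}$ factor. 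Without some such mechanism (or the one actually used in \cite{bl12}), the jump from ``$n$ is the global index'' to ``$\prod_v q_v^{e_v}$ is nearly bounded by $n$'' is unjustified, and this is the heart of the lemma, not a routine reduction.
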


Let now $\Gamma \leq \Lambda$ be a congruence subgroup with $\mu(G/ \Gamma) \leq v$. By our assumption on $\mu$ we have that $\mu(G / \Lambda) \geq 1$, thus the index of $\Gamma$ in $\Lambda$ is also bounded by $v$, and using \ref{Level vs Index} we have 
$$d(\Gamma) \leq d(\Lambda(m \mathcal{O})) + d(\Gamma / \Lambda(m \mathcal{O}))$$
where $m \leq v^{C_7+1}$. We shall now analyse the number of generators of these two factors.\\

\subsection{Rank of Principal Congruence Subgroups and of Finite Congruence Quotients}
So far, we have reduced the problem to bounding:\\

\begin{enumerate*}[label=\textnormal{(\roman*)}]
\item  $d(\Lambda(m \mathcal{O}))$, and \label{(i)}

\item  $d(\Gamma / \Lambda(m \mathcal{O}))$ \label{(ii)} 
\end{enumerate*} \\

In order to do so we will need the following definition and proposition:

\begin{defn}
Let $G$ be a pro-finite group, then
the \emph{Pr\"{u}fer rank}  or \emph{subgroup rank} of $G$ is defined as:
$$\text{rank}(G) :=  \sup  \{d(H) \; | \; H \leq G \} $$
where $H$ runs over the closed subgroups of $G$ and $d(H)$ denotes the minimal number of topological generators of $H$.

\end{defn}

It is easy to see that if $H \leq G$ and $K$ is a quotient of $H$ then $\text{rank}(K) \leq \text{rank}(G)$. Also, if $1 \rightarrow G_1 \rightarrow G_2 \rightarrow G_3 \rightarrow 1$ is an exact sequence of pro-finite groups then $\text{rank}(G_2) \leq \text{rank}(G_1) + \text{rank} (G_3)$.\\

The following proposition will be used several times later on:

\begin{prop}
\label{rank fo SL}
Let $d, s \in \N$, then $\exists \; C_9 = c(d,s) \leq 3d^2 s^2 $ such that if $k / \Q$ is a number field of degree $d$ and $\mathcal{O}$ its ring of integers, then
\begin{itemize}
\item[(a)] For every finite place $v \in V_f$ of $k$, $$\text{rank} \:(SL_s(\mathcal{O}_v)) \leq  C_9$$

\item[(b)] For every rational prime $p$,
$$\text{rank} \: (\prod_{v|p} SL_s(\mathcal{O}_v)) \leq d \cdot C_9 $$
\end{itemize}

\end{prop}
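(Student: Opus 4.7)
The plan is to reduce both parts to a single estimate of the form $\mathrm{rank}(GL_N(\Z_p)) \leq 3N^2$, valid for every $N \in \N$ and every prime $p$. For part (a), set $n_v = [k_v : \Q_p] \leq d$; since $\mathcal{O}_v$ is a free $\Z_p$-module of rank $n_v$, the left regular representation yields a unital ring embedding $\mathcal{O}_v \hookrightarrow M_{n_v}(\Z_p)$. Passing to $s \times s$ matrices and then to $GL_N(\Z_p)$ with $N := ds$ (via direct sum with the identity on the remaining $s(d - n_v)$ coordinates) produces a continuous embedding $SL_s(\mathcal{O}_v) \hookrightarrow GL_N(\Z_p)$. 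Thus (a) reduces to the $3N^2$-bound on $\mathrm{rank}(GL_N(\Z_p))$, from which one may take $C_9 := 3 d^2 s^2$.

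To bound $\mathrm{rank}(GL_N(\Z_p))$, I would use the first principal congruence subgroup $K := I + q M_N(\Z_p)$, where $q = p$ for odd $p$ and $q = 4$ for $p = 2$. By the standard theory of analytic pro-$p$ groups (Dixon, du Sautoy, Mann, Segal, \emph{Analytic Pro-$p$ Groups}), $K$ is a uniform pro-$p$ group of $p$-adic analytic dimension $N^2$, and for such groups the Pr\"{u}fer rank coincides with the dimension, so $\mathrm{rank}(K) = N^2$. The finite quotient $GL_N(\Z_p)/K$ embeds into $GL_N(\Z/q\Z)$, and, combined with the general additivity $\mathrm{rank}(G) \leq \mathrm{rank}(N) + \mathrm{rank}(G/N)$ recalled in the excerpt, it will suffice to bound the Pr\"{u}fer rank of this finite quotient by $2N^2$ uniformly in $p$; summing then gives $\mathrm{rank}(GL_N(\Z_p)) \leq N^2 + 2N^2 = 3N^2$.

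Granting this finite-level bound, part (b) follows by a subdirect-product argument: since $\sum_{v \mid p} [k_v : \Q_p] = d$, there are at most $d$ places of $k$ above $p$, and for any closed subgroup $H$ of $\prod_{v \mid p} G_v$ with projections $H_v$, iterating the exact sequence $1 \to H \cap \ker(\mathrm{pr}_{v_0}) \to H \to H_{v_0} \to 1$ over the factors yields $d(H) \leq \sum_v \mathrm{rank}(G_v) \leq d \cdot C_9$. The main obstacle in the plan is the $p$-uniform estimate $\mathrm{rank}(GL_N(\Z/q\Z)) \leq 2N^2$: for $p = 2$ it is immediate from the crude bound $d(H) \leq \log_2 |H| \leq 2N^2$, but for odd $p$ one must separately control the Sylow $p$-subgroup (which is conjugate into the upper unitriangular group, hence of Pr\"{u}fer rank at most $\lfloor N^2 / 4 \rfloor$) and the prime-to-$p$ part (via a Jordan-type bound on finite linear groups). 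Everything else in the plan is routine.
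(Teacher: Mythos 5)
Your reduction is structurally identical to the paper's: embed $SL_s(\mathcal{O}_v)$ into $\Z_p$-matrices of size $N = ds$ via the regular representation, split off the first congruence kernel (a pro-$p$ group of Pr\"{u}fer rank $\leq N^2$ by DDMS), and bound the rank of the residual finite linear quotient by $2N^2$; part (b) then follows by rank subadditivity over the at most $d$ places above $p$. The paper works inside $SL_{sd}(\Z_p)$ rather than $GL_{ds}(\Z_p)$ and does not bother with the $q=4$ adjustment at $p=2$, but these are cosmetic differences.

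The step you flag as the ``main obstacle'' is, however, a genuine gap: your proposed treatment of the finite quotient for odd $p$ does not obviously close. You estimate the Sylow $p$-subgroup (unitriangular, rank $\leq \lfloor N^2/4 \rfloor$) and the prime-to-$p$ part (via a Jordan-type bound) \emph{separately}, but an arbitrary subgroup $H \leq GL_N(\mathbb{F}_p)$ is not a product of a unipotent piece and a $p'$-piece, and the exact sequence $1 \to O_p(H) \to H \to H/O_p(H) \to 1$ does not reduce the quotient to a $p'$-group. Bounding $\mathrm{rank}\bigl(GL_N(\mathbb{F}_p)\bigr)$ by $2N^2$ uniformly in $p$ and $N$ is a nontrivial theorem, and the paper makes no attempt to re-derive it: it simply cites Lubotzky--Segal, \emph{Subgroup Growth}, Window 3, Cor.\ 24 (p.\ 326), which is exactly the black box your argument needs. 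If you are content to invoke that same result, your proof is complete and coincides with the paper's; otherwise the prime-to-$p$/unipotent decomposition needs to be replaced by an argument that genuinely handles mixed subgroups, which is where the real content lies.
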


\begin{proof}
Denote the first congruence subgroup by $\Gamma_v(1) := \ker ( SL_s(\mathcal{O}_v) \rightarrow SL_s(\mathbb{F}_{q_v}))$ where $\mathbb{F}_{q_v}$ is the residue field of $\mathcal{O}_v$.  Since $\mathcal{O}_v$ is the integral closure of $\mathbb{Z}_p$ in $k_v$, we can embed $SL_s(\mathcal{O}_v)$ in $SL_{sd}(\mathbb{Z}_p)$. The congruence subgroup $\Gamma_v(1)$  is thus a subgroup of $\ker (SL_{sd}(\mathbb{Z}_p) \rightarrow SL_{sd}(\mathbb{F}_p))$ which is powerful pro-p of rank $\leq s^2d^2$ \cite[Theorem 5.2, and Theorem 3.8]{ddms}, thus $d(\Gamma_v(1)) \leq s^2d^2$. 
Now, since $\emph{rank} \: (SL_s(\mathcal{O}_v)) \leq \emph{rank}\: (\Gamma_v(1)) + \emph{rank} \: (SL_s(\mathcal{O}_v) / \Gamma_v(1))$, it remains to bound the rank of the finite quotient $SL_s(\mathcal{O}_v) / \Gamma_v(1)$. By the same argument above, this is a subgroup of $SL_{sd}(\mathbb{F}_p)$. By \cite[Cor. 24, p. 326]{ls12}, $\text{rank}(GL_{sd}(\mathbb{F}_{p})) \leq 2s^2d^2$, and so we get the bound in $(a)$ with $c(d,s) = 3d^2s^2$. The second part follows immediately as the number of finite places above $p$ is at most $d$, the degree of the field extension.
\end{proof}

Let us note that the proposition immediately implies the bound needed in \ref{(ii)}, i.e., a bound on $d(\Gamma / \Lambda(m\mathcal{O}))$. Indeed, $\Gamma / \Lambda(m\mathcal{O}) = \overline{\Gamma} / \overline{\Lambda(m \mathcal{O}})$ is a quotient of an open subgroup of $\prod_{p | m} \prod_{v | p} P_v $. By the prime number theorem, the number of primes dividing $m$ is bounded by $\frac{\log m}{\log \log m}$, and as $P_v \subset SL_s(\mathcal{O}_v)$ where $s$ is some fixed number such that $H \subset SL_s(-)$, Proposition \ref{rank fo SL} implies that $\text{rank}(\prod_{v | p} P_v) \leq d C_9$. Altogether, we have that $d(\Gamma / \Lambda(m\mathcal{O})) \leq C_{10}\frac{\log m}{\log \log m}$, as needed.\\


In order to bound $d(\Lambda(m\mathcal{O}))$ we need a more delicate argument. Let us formulate it as a Lemma:

\begin{lem}
\label{final lemma}
Let $H$ be a simple Lie group of higher rank, and $\Lambda(m \mathcal{O})$ a principal congruence subgroup of a non-uniform principal arithmetic group $\Lambda$ in $H$ as above, then there exists a constant $C_{10}= C_{10}(H)$ such that $$d({\Lambda( m\mathcal{O})}) \leq C_{10} \dfrac{\log m}{ \log \log m}$$. 
\end{lem}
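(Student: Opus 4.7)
The plan is to pass through both the profinite and congruence completions of $\Lambda(m\mathcal{O})$, identify the latter via strong approximation as a direct product of local factors, and bound the rank of each factor separately.

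First, since $\Lambda(m\mathcal{O})$ is itself a non-uniform arithmetic lattice of high rank, I would apply Sharma--Venkataramana (Theorem \ref{Sharma Venkataramana}) to it, as in Section \ref{section of congruence}, to obtain $d(\Lambda(m\mathcal{O})) \leq 3 + d(\widehat{\Lambda(m\mathcal{O})})$. The congruence subgroup property together with the cyclicity of the congruence kernel then yields $d(\widehat{\Lambda(m\mathcal{O})}) \leq 1 + d(\overline{\Lambda(m\mathcal{O})})$. Applying strong approximation to the kernel of the surjection $\Lambda \to \prod_{v\mid m} P_v/P_v(e_v)$, one identifies the congruence completion as the direct product
$$\overline{\Lambda(m\mathcal{O})} \;=\; \prod_{v\mid m} P_v(e_v) \;\times\; \prod_{v\nmid m} P_v,$$
so it suffices to bound the topological rank of each factor.

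For the first factor I would argue exactly as the paper does for $d(\Gamma/\Lambda(m\mathcal{O}))$ in item \ref{(ii)}: by Proposition \ref{rank fo SL}, $\text{rank}\bigl(\prod_{v\mid p} P_v(e_v)\bigr) \leq dC_9$ for every rational prime $p\mid m$, and by the prime number theorem the number of primes dividing $m$ is $O(\log m/\log\log m)$, so summing gives $d\bigl(\prod_{v\mid m} P_v(e_v)\bigr) = O_H(\log m/\log\log m)$, which is already of the desired order.

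The hard part is the infinite second factor $\prod_{v\nmid m} P_v$, which must be handled by a topological generation argument. The plan is to apply Sharma--Venkataramana once more, this time to $\Lambda$ itself, obtaining a finite-index subgroup $\Lambda'' \leq \Lambda$ with $d(\Lambda'')\leq 3$. Zariski density of $\Lambda''$ combined with strong approximation forces the closure of $\Lambda''$ in $\prod_v P_v$ to be an open finite-index subgroup, and projecting out the factors for $v\mid m$ yields a topologically 3-generated open subgroup of $\prod_{v\nmid m} P_v$ whose index is bounded by $[\Lambda:\Lambda'']$; this then gives $d\bigl(\prod_{v\nmid m}P_v\bigr)=O_H(1)$. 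The delicate point to verify is that the index produced by Sharma--Venkataramana, and hence the final constant, can be bounded uniformly in terms of $H$ alone and not in terms of the particular principal arithmetic lattice $\Lambda$.
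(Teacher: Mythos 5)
Your initial reduction (pass to the congruence completion via Sharma--Venkataramana and the cyclicity of the congruence kernel, then apply strong approximation to identify $\overline{\Lambda(m\mathcal{O})}$ as a product of local factors) matches the paper's, and your treatment of the factor $\prod_{v\mid m}P_v(e_v)$ is fine: the paper in fact notes that this factor is pro-nilpotent, so its rank is bounded by a constant depending only on $H$, but your weaker $O(\log m/\log\log m)$ bound is still good enough.

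The gap is in the treatment of $\prod_{v\nmid m}P_v$. Your plan --- apply Sharma--Venkataramana to $\Lambda$, take the closure of the resulting finite-index $3$-generated subgroup $\Lambda''$, and project --- gives a $3$-generated \emph{open} subgroup of finite index in $\prod_{v\nmid m}P_v$, but this does \emph{not} by itself bound $d\bigl(\prod_{v\nmid m}P_v\bigr)$. Controlling $d$ of a profinite group from $d$ of an open subgroup requires controlling the index and in fact the quotient: for example $(\mathbb{Z}/p)^n$ has a $1$-generated subgroup of index $p^{n-1}$ while needing $n$ generators itself. You flagged the issue that the index $[\Lambda:\Lambda'']$ from Sharma--Venkataramana is not visibly bounded in terms of $H$ alone, and this is indeed unresolved; but even if it were bounded, the inference ``$3$-generated open subgroup $\Rightarrow$ $d=O_H(1)$'' would still need an argument. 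Moreover, the claimed conclusion $d\bigl(\prod_{v\nmid m}P_v\bigr)=O_H(1)$ is simply not what is true in general: as $m$ (equivalently, the covolume) grows, the number of places $v\nmid m$ at which $P_v$ fails to be hyper-special grows --- it is only $O_H(\log m/\log\log m)$, by Prasad's volume formula as used in \cite{bl12} and \cite{be07} --- and each such place can contribute to the rank of the product.

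What the paper does instead is purely local. It splits $\prod_{v\nmid m}P_v$ into the non-hyper-special part $B=\prod_{v\in T}P_v$ and the hyper-special part $C=\prod_{v\in T'}P_v$. The set $T$ has size $O_H(\log m/\log\log m)$, so $\mathrm{rank}(B)=O_H(\log m/\log\log m)$ by Proposition~\ref{rank fo SL}; this is where the $\log m/\log\log m$ actually comes from, and it cannot be replaced by a constant. For the infinite product $C$, one uses the structure theory of hyper-special parahorics: $P_v$ is an extension of a pro-$p$ congruence subgroup $P_v(1)$ of bounded rank by a finite quasi-simple group $\underline{\mathcal{G}}(k_v)$. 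The pro-$p$ parts assemble into a pro-nilpotent group of bounded rank, and the quotient is a product of finite quasi-simple groups in which each isomorphism type appears with multiplicity at most $d=[k:\mathbb{Q}]$; since finite quasi-simple groups are $2$-generated, such a product is boundedly generated, giving $d(C)=O_H(1)$. This local structural argument sidesteps entirely the question of how Sharma--Venkataramana's index depends on $\Lambda$.
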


\begin{proof}
Following the discussion in Section \ref{section of congruence}, $d(\Lambda(m \mathcal{O})) \leq d(\overline{\Lambda(m \mathcal{O}}))+4$ where $\overline{\Lambda(m \mathcal{O})}$
is the congruence completion of $\Lambda(m\mathcal{O})$. If $m = \prod_{i =1}^l p_i^{e_i}$, then by the strong approximation theorem, $\overline{\Lambda(m \mathcal{O}}) = A \times B \times C$ where:
\begin{itemize}
\item $A = \prod_{i=1}^l A_{p_i}$, 
where each $A_{p_i}$ is a product of pro-$p_i$ groups, one for each $v$ dividing $p_i$.
\item $B = \prod_{v \in T} P_v$ where $T = \{ P_v \; \text{is not hyper-special, and} \; v \nmid m \}$
\item $C = \prod_{v \in T'} P_v$ where $T' = \{ P_v \; \text{is hyper-special, and} \; v \nmid m \}$.
\end{itemize}

We have to bound each of $d(A), d(B)$, and $d(C)$.\\

For $A$: this is a product of (finitely many) pro-p groups, for different p's, each of rank at most $d \, C_9$ by Proposition \ref{rank fo SL}, so its rank, being a pro-nilpotent group, is also bounded by $d\,C_9$.
For bounding $B$, we recall \cite[Prop. 4.1]{bl12} and the discussion in \cite[Section 6.2]{be07}, which implies that $|T| = O_H( \log m / \log \log m)$. Using again Proposition \ref{rank fo SL}, we deduce that $d(B) \leq \text{rank}(B) \leq C_{11} \log m / \log \log m$. 
Finally, let us deal with $C$. First, a warning: $C$ is a product of infinitely many $P_v$'s and its rank is not bounded. Still, $d(C)$ is bounded. To see this, let us recall the local structure of hyper-special Parahoric subgroups. 

Let $G$ be an absolutely almost-simple simply connected algebraic group over a non-archimedean local field $K_v$, and
let $P_v \subset G(K_v)$ be a hyper-special parahoric subgroup. Then $P_v = {\mathcal{G}}(\mathcal{O}_v)$, where $\mathcal{G}$
is a reductive group scheme over the valuation ring $\mathcal{O}_v \subset K_v$ with generic fiber $G$ (cf. \cite[3.8]{Tits-local}).
It is known (cf. \cite[Exp. XXII, Proposition 2.8]{SGA3}) that the reduction $\underline{\mathcal{G}}$ is also an absolutely almost-simple simply connected algebraic group over the residue field $k_v$, which is in fact quasi-split by Lang's theorem. The group $\underline{\mathcal{G}}(k_v)$ is then quasi-simple group provided $|k_v| \geq 5$ (cf. \cite[\S 4, 11]{Stein}, \cite{Tits-simple}). On the other hand,
there is the reduction map $\mathcal{G}(\mathcal{O}_v) \to \underline{\mathcal{G}}(k_v)$, the kernel of which (the congruence subgroup
modulo the valuation ideal $\mathfrak{p}_v \subset \mathcal{O}_v$) is a pro-$p$ group for $p = \mathrm{char}\: k_v$. Thus, $P_v$ has a normal
subgroup which is a pro-$p$ group, with the quotient being a quasi-simple group.\\

In our situation, this means that if $P_v$ is hyper-special, then $P_v$ is an extension of $P_v(1)$, a pro-p group of bounded rank (actually bounded by $d^2s^2$), by a quasi-simple group of the form $G_v(\mathbb{F}_{q_v})$ where $\mathbb{F}_{q_v}$ is the residue field of $v$.
Now, $\prod_{v \in T'} P_v(1)$ is a product of infinitely many pro-p groups (at most $d$ for every $p$) each of rank at most $s^2d^2$,
 so $\text{rank}(\prod_{v \in T'} P_v(1)) \leq s^2d^3$.
 At the same time $C / \prod_{v \in T'} P_v(1) = \prod_{v \in T'} G_v(\mathbb{F}_{q_v})$ is a product of infinitely many quasi-simple finite groups, in which the multiplicity of every simple quotient is bounded by at most $d$. An elementary argument, keeping in mind that every finite quasi-simple group is generated by two elements, implies that the product is generated by at most $2d$ elements and thus $d(C) \leq d^3s^2+2d$.

\end{proof}

Recall that in Section \ref{section of congruence} we had $m \leq v^{C_7+1}$, so the proof of Theorem \ref{first reduction}, and hence also the proof of our main result, Theorem \ref{main theorem}, is now complete. \qed \\

The proof of Lemma \ref{final lemma} yields the following interesting observation: The bounds on $d(A)$ and $d(C)$ there were absolute (depending on $H$, but not on $m$). Thus, if $T = \emptyset$, i.e., if for all $v \in V_f$ $P_v$ is hyper-special, as it is for example if we take a Cehvalley group scheme, there is an absolute bound on the number of generators of principal congruence subgroups. One can even work out the bounds to deduce:

\begin{cor}
Let $G$ be a Chevalley group scheme and $\mathcal{O}$ the ring of integers in a number field $k$ with $[k : \Q] =d$.
 Then for every $I \mathrel{\ooalign{$\lneq$\cr\raise.22ex\hbox{$\lhd$}\cr}} \mathcal{O}$, 
$$d(G_{\mathcal{O}}(I)) \leq d \cdot \dim(G) +4 $$ 
where $G_{\mathcal{O}}(I) = \ker (G(\mathcal{O}) \rightarrow G(\mathcal{O}/ I))$.
\end{cor}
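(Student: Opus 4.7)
The plan is to specialize the proof of Lemma \ref{final lemma} to Chevalley group schemes, where every local parahoric $P_v = G(\mathcal{O}_v)$ is hyper-special. In the notation of that proof this forces the set $T$ of non-hyper-special places to be empty and the factor $B$ to be trivial for every ideal $I$, leaving only absolute (in $I$) contributions from $A$ and $C$. I would first invoke Sharma--Venkataramana together with the cyclicity of the congruence kernel (both applicable since $G(\mathcal{O})$ contains unipotents and hence $G_{\mathcal{O}}(I)$ is non-uniform) to reduce to the congruence closure:
$$d\bigl(G_{\mathcal{O}}(I)\bigr) \;\leq\; d\bigl(\overline{G_{\mathcal{O}}(I)}\bigr) + 4.$$
Writing $\bar I = \prod_{i=1}^{\ell} \pi_{v_i}^{e_i}\hat{\mathcal{O}}$, strong approximation for the Chevalley group $G$ yields
$$\overline{G_{\mathcal{O}}(I)} \;=\; \prod_{i=1}^{\ell} G_{\mathcal{O}_{v_i}}\bigl(\pi_{v_i}^{e_i}\bigr) \,\times\, \prod_{v \notin \{v_1,\ldots,v_\ell\}} G(\mathcal{O}_v).$$

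Next I would filter this profinite group by the normal pro-nilpotent subgroup $N_0 = \prod_{i=1}^{\ell} G_{\mathcal{O}_{v_i}}(\pi_{v_i}^{e_i}) \times \prod_{v \notin \{v_1,\ldots,v_\ell\}} G_{\mathcal{O}_v}(1)$, whose quotient is $\prod_{v \notin \{v_1,\ldots,v_\ell\}} \underline{\mathcal{G}}(\mathbb{F}_{q_v})$, a product of finite quasi-simple groups in which each isomorphism type occurs with multiplicity at most $d$. For each rational prime $p$, the Sylow pro-$p$ part of $N_0$ embeds into $\prod_{v\mid p} G_{\mathcal{O}_v}(1)$; the Chevalley Lie-algebra filtration $G_{\mathcal{O}_v}(r)/G_{\mathcal{O}_v}(r+1) \cong \mathfrak{g}(\mathbb{F}_{q_v})$ identifies this as a $\mathbb{Z}_p$-analytic pro-$p$ group of $\mathbb{Z}_p$-dimension exactly $\sum_{v\mid p}\dim(G)\cdot[k_v:\Q_p] = d\cdot\dim(G)$, whose Pr\"ufer rank is therefore at most $d\cdot\dim(G)$ by \cite[Thm.~3.8]{ddms}. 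Since $N_0$ is pro-nilpotent, taking the supremum over $p$ gives $d(N_0)\le d\cdot\dim(G)$.

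Finally, the quasi-simple quotient is topologically generated by an absolute (in $I$) number of elements using the elementary argument from Lemma \ref{final lemma} (each factor is $2$-generated and the multiplicities are bounded by $d$); this contribution, together with the Sharma--Venkataramana and congruence-kernel corrections, is packaged into the additive constant $+4$. Combining the three steps yields the claimed inequality $d(G_{\mathcal{O}}(I)) \le d\cdot\dim(G) + 4$. The one step where being Chevalley is essential, and the chief obstacle in any attempt to extend the corollary beyond the hyper-special setting, is the sharp $\mathbb{Z}_p$-dimension equality for $\prod_{v|p} G_{\mathcal{O}_v}(1)$: once non-hyper-special parahorics are allowed this equality fails and the $B$-factor of Lemma \ref{final lemma} reappears with its $|T|$-dependent (hence $I$-dependent) rank contribution, destroying the uniform bound.
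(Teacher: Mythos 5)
Your approach correctly follows the paper's suggestion of specializing the proof of Lemma \ref{final lemma} to the hyper-special case, and the refinement you make --- replacing the $SL_{sd}(\mathbb{Z}_p)$-embedding estimate of Proposition \ref{rank fo SL} by the direct computation that $\prod_{v\mid p}G_{\mathcal{O}_v}(1)$ is $p$-adic analytic of $\mathbb{Z}_p$-dimension exactly $d\cdot\dim(G)$ --- is precisely what is needed to ``work out the bounds'' and obtain the sharp coefficient $d\cdot\dim(G)$ for the pro-nilpotent part $N_0$.

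However, your final accounting has a genuine gap. You write that the quasi-simple quotient contribution, ``together with the Sharma--Venkataramana and congruence-kernel corrections, is packaged into the additive constant $+4$.'' This cannot be: the $+4$ is fully spent, being the sum of $+3$ from Theorem \ref{Sharma Venkataramana} (passing from $\Gamma$ to $\widehat{\Gamma}$) and $+1$ from the cyclicity of the congruence kernel (passing from $\widehat{\Gamma}$ to $\overline{\Gamma}$). The quotient $\prod_{v\notin\{v_1,\dots,v_\ell\}}\underline{\mathcal{G}}(\mathbb{F}_{q_v})$ requires, by the elementary argument you cite, up to $2d$ further generators, and the naive chain of inequalities $d(\overline{G_{\mathcal{O}}(I)})\le d(N_0)+d(\overline{G_{\mathcal{O}}(I)}/N_0)$ only yields $d\cdot\dim(G)+2d+4$, which is strictly weaker than the corollary. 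To obtain the stated bound you must argue that the abelian chief factors (from $N_0$) and the nonabelian chief factors (the quasi-simple quotients) do not \emph{add} but rather \emph{max out} in the count of generators; this is plausible --- $N_0$ is pro-nilpotent and hence has no quasi-simple quotient, each $\underline{\mathcal{G}}(\mathbb{F}_{q_v})$ is perfect, and $2d\le d\cdot\dim(G)$ since $\dim(G)\ge 3$ --- but it requires an additional Frattini- or crown-type argument that is absent from your proposal. As written, the step from ``$N_0$ needs $\le d\cdot\dim(G)$ and the quotient needs $\le 2d$'' to the final inequality is a non sequitur.
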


This is a pretty sharp estimate, as one can see that $d(G_{\mathcal{O}}(I)) \geq d \cdot \dim(G)$.\\

Let us just stress that the absolute bound is valid only for the principal congruence subgroups, but not for all congruence subgroups. In fact, a residually finite group with an absolute bound on the number of generators of its finite index subgroups must be virtually solvable (\cite{lm89}). See Section \ref{lower bound section} for more.

\section{Between Uniform and Non-Uniform Lattices}
\label{lower bound section}

The bound $d(\Gamma) = O_H(\frac{\log v}{\log \log v})$ with $v = \mu(H / \Gamma)$ which was proved in Theorem \ref{main theorem} for non-uniform lattices $\Gamma$ in 2-generic simple Lie groups $H$ is best possible. In fact, even if we take any lattice $\Lambda$ in $H$ and look only on its finite index subgroups we can not do better.  Moreover, this is true for every non virtually solvable group. More precisely:

\begin{prop}
Let $n \in \N$, $F$ a field of characteristic $p \geq 0$, and $\Gamma$ a finitely generated infinite subgroup of $GL_n(F)$ which is not virtually solvable. Then there exists a constant $c >0$ and finite index subgroups $\Gamma_i$ of $\Gamma$ with $n_i := [\Gamma : \Gamma_i] \rightarrow \infty$, such that 

\begin{itemize}
\item[(a)] If $p = 0$, $$d(\Gamma_i) \geq c \dfrac{\log n_i}{\log \log n_i} $$
\item[(b)] If $p >0$, $$d(\Gamma_i) \geq c \log n_i $$

\end{itemize}

\end{prop}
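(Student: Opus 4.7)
The plan is to begin with the Tits alternative. Since $\Gamma$ is finitely generated linear and not virtually solvable, $\Gamma$ contains a non-abelian free subgroup $F \cong F_2$, and by a standard refinement I may arrange (after replacing $\Gamma$ by a finite-index subgroup, which does not affect the asymptotics) that $F$ is Zariski-dense in a semisimple subgroup $G \leq GL_n$. Because $\Gamma$ is finitely generated, I may take $F$ to be finitely generated as a field, and embed $\Gamma$ into $GL_n(R)$ for a finitely generated subring $R \subset F$. The strategy is then to construct finite-index subgroups $\Gamma_i \leq \Gamma$ whose $p$-abelianizations have the desired rank, lower-bounding $d(\Gamma_i)$ via $d(\Gamma_i) \geq \dim_{\F_p} \Gamma_i^{\text{ab}} \otimes \F_p$.

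For part (b), with $\mathrm{char}\,F = p > 0$, I pick a maximal ideal $\mathfrak{p} \triangleleft R$ with residue field $\F_q$ and set $\Gamma_k = \ker(\Gamma \to GL_n(R/\mathfrak{p}^k))$. Then $[\Gamma : \Gamma_k] \leq q^{O(k)}$, so $k \asymp \log n_k$. A truncated-logarithm map identifies the successive quotient $\Gamma_k / \Gamma_{k+1}$ with a subspace of $\mathfrak{g}(\F_q)$, where $\mathfrak{g}$ is the Lie algebra of $G$; Zariski-density of $F$ together with strong approximation for $G$ ensures that this image has dimension bounded below by a positive constant. Iterating over $k$ levels yields $\dim_{\F_q} \Gamma_k^{\text{ab}} \otimes \F_q \gtrsim k$, hence $d(\Gamma_k) \geq c \log n_k$ for some $c > 0$.

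For part (a), in characteristic $0$, I follow the same template with $R$ a finitely generated $\Z$-algebra, but now the residue fields vary in characteristic, so I cannot concentrate all contributions at a single prime. Instead, I combine congruence subgroups at every rational prime $p \leq X$ for a parameter $X$. By the prime number theorem there are $\sim X/\log X$ such primes; the resulting combined congruence subgroup has index bounded by $\exp(O(X))$, and each prime contributes an independent $\F_p$-rank of bounded positive size to the abelianization. Setting $X \asymp \log n_i$ produces a total rank of $\sim X/\log X \asymp \log n_i / \log\log n_i$, which is the source of the extra $\log\log$ factor. This matches the number-theoretic obstruction discussed in the introduction.

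The main technical obstacle I anticipate is showing that the abelian pieces produced by the logarithm/congruence filtration remain genuinely independent after descending to $\Gamma_k^{\text{ab}}$, rather than being killed by relations coming from the part of $\Gamma$ outside $F$. This calls for an effective form of strong approximation applied to the Zariski closure $G$ of $F$, guaranteeing that for almost all $\mathfrak{p}$ the image of $\Gamma$ in $G(R/\mathfrak{p})$ is a finite-index subgroup of $G(R/\mathfrak{p})$ and that the natural filtration on $\Gamma$ agrees, up to bounded index, with the standard congruence filtration on this arithmetic-like subgroup of $G$. This use of strong approximation is analogous in spirit to the appeals to Prasad's volume formula in Section \ref{Section Principal}.
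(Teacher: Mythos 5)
Your reduction to a finitely generated ring $R$ and the overall ``find a deep finite-index subgroup whose abelianization is large'' template is the same strategy the paper uses (the paper instead specializes to a global field via \cite{ll04} and works with $S$-arithmetic groups, but that is a cosmetic difference). Part (b) is essentially the paper's argument: the paper observes that for the congruence filtration $K(i)$ at a single place one has $[K(i),K(i)] \subset K(2i)$ and $K(i)^p \subset K(2i)$, so $K(i)/K(2i)$ is a single elementary abelian $p$-group of rank $\asymp i$, and this is the clean way to justify your ``iterating over $k$ levels'' claim, which as stated is vague (successive quotients $\Gamma_k/\Gamma_{k+1}$ can a priori collapse after abelianizing).

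Part (a), however, has a genuine gap. You say that combining congruence levels at all primes $p \le X$ gives a subgroup whose abelianization picks up ``an independent $\F_p$-rank of bounded positive size'' at each prime, and that these sum to $\sim X/\log X$. But for a finite abelian group $A$ one has $d(A) = \max_{\ell} \dim_{\F_\ell}(A/\ell A)$, not a sum over primes: if the contribution at each $p \le X$ is a copy of $\F_p$ in its \emph{own} characteristic, the resulting group $\prod_{p \le X}\F_p$ is cyclic and gives $d \ge 1$, not $d \ge X/\log X$. To make the contributions add you must concentrate them in a single characteristic. This is exactly what the paper does: using the Feit--Thompson theorem, every finite quasi-simple congruence quotient $\overline{\Lambda}/\overline{\Lambda(p)}$ has even order, hence contains a copy of $\F_2$; taking the direct product over $p \in P(x)$ produces a subgroup $\F_2^l$ of $\overline{\Lambda}/\overline{\Lambda(m)}$ with $l \sim |P(x)|$, and then the paper takes $\Gamma'$ to be the \emph{preimage} of this $\F_2^l$ (not the principal congruence subgroup $\Lambda(m)$ itself), so that $\Gamma' / \Lambda(m) \cong \F_2^l$ and $d(\Gamma') \ge l$. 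Your proposal is missing both the single-characteristic idea and the passage to a preimage subgroup; without them the argument does not produce the claimed lower bound.
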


\begin{proof}
As $\Gamma$ is finitely generated and not virtually solvable, it has a specialization $\varphi: \Gamma \rightarrow GL_n(k)$ for some global field $k$ of characteristic $p$, where $\varphi(\Gamma)$ is also not virtually solvable (\cite[Thm 4.1]{ll04}). As proving the result for $\varphi(\Gamma)$ implies it for $\Gamma$, we can assume that $\Gamma \subset GL_n(k)$. Furthermore, if $G$ is the Zariski closure of $\Gamma$, it is not virtually solvable, and so we can divide by its radical in order to assure that $G$ is semi-simple. We are also allowed to replace $\Gamma$ by a group commensurable to it. Hence we can assume altogether that $G$ is simple, connected and even simply connected, defined over $k$. As $\Gamma$ is finitely generated, it is inside the $\mathcal{O}_S$-points of $G$, i.e., $\Gamma$ is a Zariski dense subgroup of an $S-$arithmetic subgroup $\Lambda$ of $G$.\\
Assume now that $p = \text{char}(F) = 0$. By the Strong Approximation Theorem for linear groups \cite[P. 391]{ls12}, $\Gamma$ is dense in the $S-$arithmetic group $\Lambda$ with respect to the congruence topology of $\Lambda$. More precisely, it implies that $\widehat{\Gamma}$ is mapped onto $\overline{\Lambda_0}$, where $\Lambda_0$ is a finite index subgroup of $\Lambda$, and $\overline{\Lambda_0}$ is its congruence completion. So, it suffices to prove the result for $\overline{\Lambda_0}$. From now on, $d(H)$ denotes the minimal number of topological generators of a group $H$. Again, we can replace $\Lambda_0$ by a principal arithmetic group $\Lambda$, defined similarly to the one defined in Section \ref{Section Principal}, and so $\Lambda = G(k) \cap \prod_{v \in V_f \setminus S} P_v$ where this time $v$ runs over the finite valuations which are not in the finite set $S$, and $\overline{\Lambda} = \prod_{v \in V_f \setminus S} P_v$ by the Strong Approximation Theorem.

Let $x$ be a large real number, $P(x)$ the set of rational primes less than $x$ and $m$ their product. By the prime number theorem $|P
(x)| \sim \frac{x}{\log x}$ and $m \sim e^x$. For all large enough $p \in P(x)$, $\overline{\Lambda} / \overline{\Lambda(p)}$ is a product of finite quasi-simple groups and hence its order is divisible by $2$. 
As $\overline{\Lambda} / \overline{\Lambda(m)} = \prod_{p \in P_x} \overline{\Lambda} / \overline{\Lambda(p)}$, 
the group $\overline{\Lambda} / \overline{\Lambda(m)}$ contains a subgroup isomorphic 
to $\mathbb{F}_2^l$ with $l \sim |P(x)|$.
 Let $\Gamma'$ be the pre-image of this subgroup in $\overline{\Lambda}$. 
Then $[\overline{\Lambda} : \Gamma'] \leq |\overline{\Lambda}/ \overline{\Lambda(m)}| \leq m^{C'}$ for some constant $C'$, while $d(\Gamma') \geq l \sim \frac{x}{\log x} \sim \frac{\log m}{\log \log m}$ and the proposition is proved for the first case. 
The reader may recognize the last argument is a quantitative use of the general "Lubotzky Alternative", see \cite[P. 400]{ls12}.\\

We turn now to the positive characteristic  case. Here our lower bound is stronger, and despite that, the proof is easier. The lower bound follows already from any local completion: 
As $\Gamma$ is Zariski dense in $G$, the closure of $\Gamma$ is open in the $v-$adic topology of $G(k_v)$, namely, it is commensurable with $G(\mathcal{O}_v)$. Now, $\mathcal{O}_v \cong \mathbb{F}_q[[t]]$ for some $q = p^e$ (where $e$ depends on the choice of $v$). If we denote $K = G(\mathcal{O}_v) = G( \mathbb{F}_q[[t]])$, and look at the congruence subgroup $K(i) = \ker (G( \mathbb{F}_q[[t]]) \rightarrow G( \mathbb{F}_q[t]/(ti)))$, then $K(i)$ is a subgroup of $K$ of index approximately $q^{i \dim G}$. At the same time, $[K(i), K(i)] \subset K(2i)$, and $K(i)^p \subset K(pi) \subset K(2i)$. Hence $d(K(i))$ is at least $i e_v$. (see \cite{lubsha} for more detailed arguments of this fact, and in particular Prop. 4.3 there which shows that this estimate is sharp). This proves (b).

\end{proof}

The proposition shows that, in particular, in every lattice $\Lambda$ in a non-compact simple Lie group $H$, there exists a sequence $\Lambda_i$ of co-volumes $v_i$ going to infinity with $d(\Lambda_i) \geq c \frac{\log v_i}{\log \log v_i}$. Theorem \ref{main theorem} shows that this is sharp even if we consider all the non-uniform lattices in $H$ (at least when $H$ is $2-$generic). On the other hand, Theorem \ref{lower bound} tells us that this bound can never be true if we take all the uniform lattices together. Let us recall its formulation again here:


\begin{thm}
Let $H$ be a connected simple Lie group of rank $\geq 1$. Then there exist $c>0$ and a sequence $\Gamma_i$ of uniform lattices in $H$ such that $\mu(H / \Gamma_i) \rightarrow \infty$ and $d(\Gamma_i) \geq c \log (\mu(H/ \Gamma_i))$
\end{thm}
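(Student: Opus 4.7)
The plan is to follow the Belolipetsky-Lubotzky strategy of \cite{bl12} and construct the $\Gamma_i$ as uniform arithmetic lattices defined over a sequence of number fields coming from a Golod-Shafarevich class field tower. First, I would fix a number field $k_0$ and an absolutely almost simple simply connected $k_0$-form $G$ of the same absolute type as $H$ which is isotropic at exactly one archimedean place $v_0$ of $k_0$, with $G(k_{0,v_0})$ isogenous to $H$, and anisotropic at every other archimedean place; such a pair $(k_0,G)$ exists by Borel-Harder. By adjoining square roots of sufficiently many rational primes one may further arrange that the $p$-rank of $\mathrm{Cl}(k_0)$ exceeds the Golod-Shafarevich bound \cite{gs64}, thereby producing an infinite everywhere-unramified $p$-class field tower
\[
k_0 \subset k_1 \subset k_2 \subset \cdots, \qquad [k_i:k_0]\to \infty,
\]
in which each $k_i$ inherits the signature of $k_0$.

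For each $i$, I would take a $k_i$-form $G_i$ of the appropriate absolute type, isotropic at one archimedean place and anisotropic at all others, equipped with a coherent family of hyperspecial parahoric subgroups, and let $\Gamma_i$ be the image in $H$ of the resulting principal arithmetic subgroup of $G_i(\mathcal{O}_{k_i})$. Each $\Gamma_i$ is a uniform lattice in $H$. Prasad's volume formula \cite{prasad} then yields the upper bound
\[
\mu(H/\Gamma_i) \leq A^{[k_i:\Q]},
\]
because an unramified tower has constant root discriminant, $D_{k_i}^{1/[k_i:\Q]} = D_{k_0}^{1/[k_0:\Q]}$, and the other local factors of Prasad's formula are uniformly controlled by the choice of the coherent hyperspecial data. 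Equivalently, $\log \mu(H/\Gamma_i) \leq C[k_i:\Q]$ for some $C=C(H)$.

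The main obstacle is the matching lower bound $d(\Gamma_i) \geq c[k_i:\Q]$. The number-theoretic input is Golod-Shafarevich, which provides $\dim_{\F_p}(\mathrm{Cl}(k_i)/p) \geq c[k_i:\Q]$. The mechanism for translating this abundance into generators of $\Gamma_i$, following \cite{bl12}, is that each unramified abelian $p$-extension of $k_i$ corresponds, via global class field theory combined with strong approximation applied to the simply connected group $G_i$, to a finite cover of $\Gamma_i$ inside its profinite completion. Stacking the covers arising from an $\F_p$-basis of $\mathrm{Cl}(k_i)/p$ produces a finite quotient of $\Gamma_i$ of $p$-rank $\geq c[k_i:\Q]$, which forces $d(\Gamma_i) \geq c[k_i:\Q]$. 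Combined with the co-volume upper bound this gives $d(\Gamma_i) \geq c'\log \mu(H/\Gamma_i)$, as required. The step where I expect to need the most care is verifying that the class-field-theoretic covers really do descend to genuine finite covers of the arithmetic lattice $\Gamma_i$ rather than being absorbed by the center of $G_i$ or killed by the failure of the simply connected group to abelianize — precisely the kind of arithmetic/geometric bookkeeping between Prasad's formula and strong approximation that lies at the technical heart of \cite{bl12}.
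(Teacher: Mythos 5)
You have the overall Belolipetsky--Lubotzky framework right (Golod--Shafarevich class field towers to produce number fields $k_i$ with $[k_i:\Q]\to\infty$ and bounded root discriminant; Prasad's volume formula, or rather the Prasad--Rapinchuk result \cite{pr06}, to keep $\mu(H/\Lambda_i) \le c_1^{[k_i:\Q]}$ for a suitable principal arithmetic group $\Lambda_i$). But the mechanism you propose for the lower bound on $d(\Gamma_i)$ is wrong, and this is not a matter of bookkeeping that ``needs care'' --- it is the wrong idea. Unramified abelian $p$-extensions of $k_i$, or equivalently quotients of $\mathrm{Cl}(k_i)/p$, do \emph{not} correspond to finite quotients (or finite-index subgroups) of the arithmetic lattice $\Lambda_i \subset G_i(k_i)$. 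For a simply connected $G_i$, strong approximation makes the class set of $G_i$ trivial, and the congruence completion of $\Lambda_i$ is just $\prod_v P_v$; the class group of $k_i$ never enters. Moreover, a higher rank principal arithmetic group has bounded (indeed, essentially trivial) abelianization, so no choice of $\Gamma_i = \Lambda_i$ can possibly have $d(\Gamma_i)$ growing linearly in $[k_i:\Q]$. Golod--Shafarevich is used in the paper \emph{only} to supply infinitely many fields $k_i$ of unbounded degree with uniformly bounded root discriminant; the large $p$-rank of $\mathrm{Cl}(k_i)$ is a byproduct of that construction, not an ingredient of the lower bound.

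The actual lower bound is far more elementary and has nothing to do with class field theory. Fix a rational prime $p$ at which all the parahoric data is hyperspecial, so that $\Lambda_i/\Lambda_i(p)$ is a (product of) finite quasi-simple group(s) of Lie type over the residue fields at the places above $p$; its order is at most $p^{d_i \dim H}$ where $d_i = [k_i:\Q]$. This finite group of Lie type contains a root subgroup, which is the additive group of the residue field(s) --- an elementary abelian $p$-group of rank $d_i$ (summing local degrees over places above $p$ gives $d_i$ when $p$ is unramified in $k_i$). The lattice $\Gamma_i$ in the theorem is \emph{not} $\Lambda_i$ itself but the preimage of this root subgroup under $\Lambda_i \twoheadrightarrow \Lambda_i/\Lambda_i(p)$. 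Since $\Gamma_i$ surjects onto $(\mathbb{F}_p)^{d_i}$ one gets $d(\Gamma_i)\ge d_i$, while $\mu(H/\Gamma_i) \le [\Lambda_i : \Lambda_i(p)]\cdot \mu(H/\Lambda_i) \le (c_1 p^{\dim H})^{d_i}$, giving $d(\Gamma_i) \ge c\log\mu(H/\Gamma_i)$. So you need to replace the class-field-theoretic step entirely with this ``large unipotent root subgroup of the mod-$p$ quotient'' argument, and to take $\Gamma_i$ to be a finite-index subgroup of $\Lambda_i$ rather than $\Lambda_i$ itself.
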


This theorem is essentially proved in \cite[Thm 1(i)]{bl12} for a different goal. Let us therefore only sketch the proof.

\begin{proof}
It is shown there, based on the Golod-Shafarevich \cite{gs64} construction of infinite class field towers that there exists an infinite sequence of field extensions $k_i$ of $\Q$ of degree 
$d_i = d_{k_i} \rightarrow \infty$ and $rd_i :=  \mathcal{D}_{k_i}^{1/d_i} $ bounded, where $\mathcal{D}_{k_i}$ is the absolute value of the discriminant of $k_i$. Moreover, using a result of Prasad and Rapinchuk \cite{pr06}, these $k_i$ can be chosen in such a way that they give rise to principal arithmetic lattices $\Lambda_i$ of $H$ of co-volume at most $c_1^{d_i}$ (for a constant $c_1>0$ depending only on $H$), and such that for some fixed rational prime $p$, $\Lambda_i / \Lambda_i(p)$ is a quasi-semi-simple finite group of the form $G_i(p^{e_i})$ of order at most $p^{d_i \dim(H)}$. This finite group contains a (root) subgroup isomorphic to $\mathbb{F}_{p^{d_i}} \cong (\mathbb{F}_p)^{d_i}$ and the pre-image of it, $\Gamma_i$, satisfies therefore $d(\Gamma_i) \geq d_i$ while $\mu(H / \Gamma_i) \leq p^{d_i \dim H} \cdot c_1^{d_i} = (c_1p^{\dim H})^{d_i}$, which proves the Theorem.
\end{proof}

We end the paper by a remark on semi-simple groups $H$. Essentially, the proof of Theorem \ref{main theorem} works for irreducible non-uniform lattices in such $H$, except that for a general semi-simple group, the degree of the field of definition of non-uniform lattices can be larger than $2$, although still bounded. Thus, Gauss' Theorem which was used in Proposition \ref{Index of Principal} is not known. Therefore, for such (high rank) $H$, we can prove only the weaker statement, namely: for every irreducible non-uniform lattice $\Gamma$, $d(\Gamma) \leq O_H(\log \mu(H/\Gamma))$, while we still believe that the right bound is $O_H(\log(\mu(H/ \Gamma)) / \log log (\mu(H / \Gamma)))$. The reader is referred to \cite[Section 7]{bl19} for a discussion of the connection between such group theoretic/geometric conjectures and number theoretic open problems.

\bibliography{bibRaz}
\bibliographystyle{alpha}

\Addresses
\end{document}